\documentclass[12pt]{extarticle}
\usepackage{amsmath, amsthm, amssymb, color}
\usepackage[colorlinks=true,linkcolor=blue,urlcolor=blue]{hyperref}
\usepackage{graphicx}
\usepackage{caption}
\usepackage{mathtools}
\usepackage{enumerate}
\usepackage{verbatim}
\usepackage{nicematrix}
\usepackage{tikz,tikz-cd,tikz-3dplot}
\usepackage{amssymb}
\usetikzlibrary{matrix}
\usetikzlibrary{arrows}
\usepackage{algorithm}
\usepackage{caption}
\usepackage{subcaption}
\headheight 0in
\headsep 0in
\evensidemargin 0in
\oddsidemargin \evensidemargin
\textwidth 6.5in
\topmargin .25in
\textheight 8.8in
\synctex=1
\usepackage{makecell}
\usepackage{array}

\newtheorem{theorem}{Theorem}
\newtheorem{proposition}[theorem]{Proposition}
\newtheorem{lemma}[theorem]{Lemma}
\newtheorem{corollary}[theorem]{Corollary}

\theoremstyle{definition}

\newtheorem{remark}[theorem]{Remark}

\newtheorem{example}[theorem]{Example}

\newtheorem{question}[theorem]{Question}
\numberwithin{theorem}{section}

\newcommand{\PP}{\mathbb{P}}
\newcommand{\RR}{\mathbb{R}}

\newcommand{\CC}{\mathbb{C}}
\newcommand{\ZZ}{\mathbb{Z}}

\title{\bf Minimal Kinematics on $\mathcal{M}_{0,n}$}

\author{Nick Early, Ana\"elle Pfister and Bernd Sturmfels}

\date{}
\begin{document}
\maketitle

\begin{abstract}
  \noindent
  Minimal kinematics identifies likelihood degenerations 
  where the critical points are given by rational formulas.
  These rest on the Horn uniformization of Kapranov-Huh.
We characterize all choices of minimal kinematics on the moduli space $\mathcal{M}_{0,n}$.
These choices are motivated by the CHY model in physics and they are
represented combinatorially by 2-trees. We  compute 2-tree amplitudes, and we 
explore extensions to non-planar on-shell diagrams, here identified
with the hypertrees of Castravet-Tevelev.
  \end{abstract}

\section{Introduction}

The moduli space $\mathcal{M}_{0,n}$ of $n$ labeled points
on the projective line $\PP^1$ plays a prominent role in algebraic geometry
and its interactions with combinatorics. It is equally important in
 physics where it is used, for example, in the CHY model \cite{CHY} to 
compute scattering amplitudes.
The space $\mathcal{M}_{0,n}$ is a very affine variety
of dimension $n-3$, with coordinates given by the  matrix
\begin{equation}
\label{eq:M0nmatrix}
X \,\,\, = \,\,\, \begin{bmatrix}
\,1 & 1 & 1 & 1 & \cdots & 1 & 0 \, \,\\
\,0 & 1 & x_1 & x_2 & \cdots & x_{n-3} & 1 \,\,
\end{bmatrix}.
\end{equation}
More abstractly, $\mathcal{M}_{0,n}$ is the quotient of the
open Grassmannian ${\rm Gr}(2,n)^o$ by the action of the 
torus $(\CC^*)^n$.  We write $p_{ij}$ for the Pl\"ucker coordinates
on ${\rm Gr}(2,n)$. These are the $2 \times 2$ subdeterminants of $X$,
and being in ${\rm Gr}(2,n)^o$ means that $p_{ij} \not= 0$ for all $1 \leq i < j \leq n$.

A basic ingredient in the CHY model is the following {\em scattering potential} on 
 $\mathcal{M}_{0,n}$:
\begin{equation}
\label{eq:scattering}
 L \,\,\, = \,
\sum_{1 \leq i < j \leq n} s_{ij} \cdot {\rm log}( p_{ij}) .
\end{equation}
The coefficients $s_{ij}$ are known as {\em Mandelstam invariants}.
Using the conventions that  $\,s_{ii} = 0$ and $s_{ji} = s_{ij}$,
the Mandelstam invariants must satisfy the {\em momentum conservation relations}
\begin{equation}
\label{eq:momentumcons} \sum_{j=1}^n s_{ij} \,\,=\,\, 0 \quad \hbox{for all} \,\,\, i \in \{1,2,\ldots,n\}. 
\end{equation}
These relations ensure that $L$ is well-defined 
on $\mathcal{M}_{0,n} = {\rm Gr}(2,n)^o/(\CC^*)^n$, for any  branch of the logarithm function.
The following result on the critical points of $L$ is well-known.

\begin{proposition} For a general choice of $s_{ij}$, the scattering potential $L$ has $(n-3)!$ complex critical points on $\mathcal{M}_{0,n}$.
If the $s_{ij}$ are real numbers, then all $(n-3)!$ critical points are real.
\end{proposition}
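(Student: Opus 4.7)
The plan is to gauge-fix $L$ to an explicit master function on a hyperplane arrangement complement in $\CC^{n-3}$, and then combine the Varchenko--Orlik-Terao critical-point count with Varchenko's bounded-chamber theorem.

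First, I would substitute \eqref{eq:M0nmatrix} to compute the Pl\"ucker coordinates explicitly: $p_{1,i+2}=x_i$, $p_{2,i+2}=x_i-1$, $p_{i+2,j+2}=x_j-x_i$, while $p_{1,2}=p_{1,n}=p_{2,n}=p_{i+2,n}=1$. This reduces $L$ to
\[
L(x) \,=\, \sum_{i=1}^{n-3} s_{1,i+2}\log x_i \,+\, \sum_{i=1}^{n-3} s_{2,i+2}\log(x_i-1) \,+ \sum_{1\le i<j\le n-3} s_{i+2,j+2}\log(x_j-x_i),
\]
a master function on the complement of the arrangement $\mathcal{A}\subset\CC^{n-3}$ with hyperplanes $\{x_i=0\}$, $\{x_i=1\}$, $\{x_i=x_j\}$. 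The relations \eqref{eq:momentumcons} solve uniquely (up to one residual parameter) for the $s_{ij}$ absent from $L$ in terms of those present, so the exponents appearing in $L$ are unconstrained free parameters.

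Next, for the complex count I would invoke the theorem of Varchenko and Orlik-Terao (extended by Huh to very affine varieties): for generic exponents, the master function on the complement $V$ of an essential hyperplane arrangement has exactly $(-1)^{\dim V}\chi(V)$ nondegenerate complex critical points. The Euler characteristic is $\chi(\mathcal{M}_{0,n}) = (-1)^{n-3}(n-3)!$, which follows by induction from the fibration $\mathcal{M}_{0,n+1}\to\mathcal{M}_{0,n}$ whose fiber is $\PP^1$ minus $n$ points of Euler characteristic $2-n$, with base case $\mathcal{M}_{0,3}=\{\mathrm{pt}\}$. Genericity of $s_{ij}$ within the linear subspace cut out by \eqref{eq:momentumcons} is automatic, yielding exactly $(n-3)!$ complex critical points.

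Finally, for reality I would apply Varchenko's chamber theorem: for positive real exponents, the master function has exactly one real critical point in each bounded chamber of $\mathcal{A}_\RR$. The bounded chambers of $\mathcal{A}_\RR$ are precisely the open simplices $\{0<x_{\sigma(1)}<\cdots<x_{\sigma(n-3)}<1\}$ indexed by $\sigma\in S_{n-3}$, numbering $(n-3)!$ and hence saturating the complex count. The main obstacle is extending reality from this positive-exponent cone to \emph{all} generic real $s_{ij}$ satisfying \eqref{eq:momentumcons}: here I would argue that real critical points leave the real locus only via pairwise conjugate-collision, a codimension-one event along the real discriminant, so the number of real critical points is locally constant on the complement of the real discriminant; combining this with connectedness of the generic real kinematics locus then propagates reality throughout. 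Establishing this last connectedness step is the substantive part of the argument.
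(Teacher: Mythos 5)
The paper itself gives no proof of this proposition --- it defers entirely to \cite[Section 2]{ST} --- so I am measuring your proposal against the standard argument. Your complex count is correct and is essentially that argument: after gauge fixing via (\ref{eq:M0nmatrix}), $L$ becomes the master function of the essential arrangement $\{x_i=0\},\{x_i=1\},\{x_i=x_j\}$ in $\CC^{n-3}$, the exponents that actually occur are the free coordinates $s_{ij}$ with $(i,j)\in S$ from (\ref{eq:setS}) (so genericity is unobstructed by momentum conservation), and the Orlik--Terao/Huh theorem gives $(-1)^{n-3}\chi(\mathcal{M}_{0,n})=(n-3)!$ nondegenerate critical points. Your identification of the bounded chambers with the $(n-3)!$ simplices $0<x_{\sigma(1)}<\cdots<x_{\sigma(n-3)}<1$ is also right, and Varchenko's bounded-chamber theorem then yields reality of all critical points whenever the free exponents are all positive.

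The gap is your final step, and it is not merely ``substantive'' but structurally broken as stated. Local constancy of the real-root count off the real discriminant is true, but the complement of a real discriminantal hypersurface in $\RR^{n(n-3)/2}$ is in general \emph{disconnected} --- separating parameter space into regions with different numbers of real roots is precisely what a real discriminant does (compare $x^2+bx+c$ and the parabola $b^2=4c$). So ``connectedness of the generic real kinematics locus'' is exactly the statement you cannot expect to hold and cannot derive from conjugate-collision considerations; you would additionally need to exclude real critical points escaping to the hyperplanes or to infinity along a connecting path. Reality for \emph{arbitrary} generic real $s_{ij}$, with mixed signs forced outside the positive cone, requires a different mechanism: the known proofs identify the critical points of this master function with joint eigenvalues of the commuting Gaudin Hamiltonians, which for real parameters are symmetric with respect to a positive definite form and hence have real spectrum --- the Mukhin--Tarasov--Varchenko circle of ideas underlying the proof of the Shapiro conjecture, and the route taken in the source \cite{ST} that this paper cites. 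Without such an input, your argument establishes reality only on the positive orthant of kinematic parameters, not for all real $s_{ij}$.
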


We refer to \cite[Section 2]{ST} for a proof, computational aspects,
and a statistics perspective. 

\begin{example}[$n=4$]
Up to an additive constant $s_{23}  \cdot {\rm log}(-1)$, we have
\begin{equation}
\label{eq:tiny}
L \,\,=\,\, s_{13} \cdot {\rm log}(x_1) \,+ \,s_{23} \cdot {\rm log}(1-x_1). 
\end{equation}
This is the log-likelihood function for coin flips with bias $x_1$, when
 heads resp.~tails were observed $s_{13}$ resp.~$s_{23}$ times.
The unique critical point of $L$ is the {\em maximum likelihood~estimate}:
\begin{equation}
\label{eq:simpleformula} \hat x_1 \,\, = \,\, \frac{s_{13}}{s_{13} + s_{23}}. 
\end{equation}
\end{example}

For $n \geq 5$  there is no simple formula
because the {\em ML degree} is $(n-3)!$. 
Minimal kinematics \cite{CE,Early} provides
an attractive alternative.
The idea is to set a few $s_{ij}$ to zero until we 
reach a model of ML degree one \cite{ABFKST, DMS, Huh}.
This yields nice
rational functions,  like (\ref{eq:simpleformula}), for all~$n$.
 
\begin{example}[$n=6$] \label{ex:firstchoice}
One choice of minimal kinematics is the codimension $3$ subspace defined by
$s_{24} =  s_{25} = s_{35} = 0$. After this substitution,
the scattering potential (\ref{eq:scattering}) becomes
$$L_T = s_{13} \log \left(x_1\right)+s_{14} \log \left(x_2\right)+s_{15} \log \left(x_3\right)+s_{23} \log \left(x_1{-}1\right)+s_{34} \log \left(x_2{-}x_1\right)+s_{45} \log \left(x_3{-}x_2\right). $$
Our task is to solve the scattering equations  $\nabla L_T = 0$. Explicitly, these equations are
$$	\frac{s_{13}}{x_1}+\frac{s_{23}}{x_1-1}-\frac{s_{34}}{x_2-x_1} \,\, = \,\,
		\frac{s_{14}}{x_2}+\frac{s_{34}}{x_2-x_1}-\frac{s_{45}}{x_3-x_2} \,\, = \,\,
			\frac{s_{15}}{x_3}+\frac{s_{45}}{x_3-x_2}\, \,\, =\, \,\, 0. $$
This system is easily solved, starting from the end.  Back-substituting gives the critical~point:
$$ \hat x_1 \,=\, \frac{s_{13} + s_{14} + s_{15} + s_{34} + s_{45}}{s_{13} + s_{14} + s_{15} + s_{23} + s_{34} + s_{45}} 
\,, \,\,
\hat x_2 \,=\, \hat x_1 \cdot \frac{ s_{14} + s_{15} + s_{45}}{s_{14} + s_{15} + s_{34} +  s_{45}}
\, , \,\, \hat x_3 \,=\, \hat x_2 \cdot \frac{s_{15}}{s_{15} + s_{45}}.$$
\end{example}

The key observation is that numerators and denominators are
products of linear forms with positive coefficients. This is 
characteristic of all models of ML degree one, thanks
to a theorem of Huh \cite{Huh}. This goes back to Kapranov \cite{Kap}
who coined the term {\em Horn  uniformization}.
We seek to find all Horn uniformizations of the
      moduli space~$\mathcal{M}_{0,n}$. 
      Assuming the representation in (\ref{eq:M0nmatrix}),
      the solution is given by graphs called $2$-trees.
      Their~vertices are indexed by $[n-1] = \{1,2,\ldots,n-1\}$.
      The following is our first main result in this article.
      
\begin{theorem} \label{thm:vier} Choices of minimal kinematics on $\mathcal{M}_{0,n}$
are in bijection with 2-trees on~$[n{-}1]$.
\end{theorem}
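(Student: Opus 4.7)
The strategy is to exploit Huh's theorem \cite{Huh} that ML degree one is equivalent to admitting a Horn uniformization, and to organize the bijection around the recursive structure of $2$-trees (simplicial-vertex induction). Using the parametrization (\ref{eq:M0nmatrix}), each Pl\"ucker coordinate $p_{ij}$ with $1 \le i < j \le n-1$ is a linear form in $x_1,\dots,x_{n-3}$, while all Pl\"uckers involving column $n$, together with $p_{12}$, are identically one. A choice of minimal kinematics is therefore encoded, modulo momentum conservation, by the set $E$ of pairs $\{i,j\} \subseteq [n-1]$ where $s_{ij}$ is allowed to be nonzero, and the task is to show that $E$ is exactly the edge set of a $2$-tree on $[n-1]$.

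\emph{Sufficiency.} I would induct on $n$, using that any $2$-tree $T$ on $[n-1]$ with $n-1 \ge 4$ has a simplicial vertex $v$ of degree two, belonging to a unique triangle $\{u,w,v\}$. Then $x_{v-2}$ appears in $L_T$ only through the two summands indexed by the edges $\{u,v\}$ and $\{w,v\}$, so $\partial L_T/\partial x_{v-2}=0$ is linear in $x_{v-2}$ with a unique, explicit rational solution $\hat x_{v-2}$ in $x_{u-2}, x_{w-2}, s_{uv}, s_{wv}$. Back-substitution yields, up to irrelevant additive constants, the scattering potential of the smaller $2$-tree $T-v$ on $[n-1]\setminus\{v\}$ with Mandelstams shifted by the standard rule $s_{uw} \mapsto s_{uw}+s_{uv}+s_{wv}$. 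By induction this reduced system has ML degree one with rational critical point, and the composite rational formulas assemble into a Horn uniformization of $L_T$. The base case $n=4$ is (\ref{eq:simpleformula}).

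\emph{Necessity.} Conversely, let $E$ be the support of a minimal kinematics. A dimension argument first pins down $|E|=2n-5$: a Horn uniformization of $\mathcal{M}_{0,n}$ consists of $n-3$ rational eliminations, one per scattering variable, each of which consumes two elements of $E$, leaving a single distinguished edge for the initial conditions. Next, I would prove that $E$ must be chordal and that every maximal clique of $E$ has size three, by showing that any induced $4$-cycle, or any triangle-free substructure, decouples the scattering problem into a subsystem supported on a smaller $\mathcal{M}_{0,k}$ with ML degree $(k-3)! > 1$, contradicting ML degree one of the full kinematics. Graphs on $[n-1]$ with exactly $2n-5$ edges that are chordal with triangular maximal cliques are precisely the $2$-trees.

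\emph{Main obstacle.} The hard step is the necessity direction; sufficiency follows from the clean simplicial-vertex induction above. Rigorously ruling out every non-$2$-tree support $E$ requires a genuine obstruction, and the cleanest route is likely through the Horn--Kapranov matrix \cite{Kap} whose rows are indexed by $E$: Huh's ML-degree-one criterion translates into a unimodularity/rank property of this matrix, and I expect the $2$-tree axioms (chordality with triangular maximal cliques) to emerge as its combinatorial consequence. The delicate point is bookkeeping how momentum conservation interacts with the Pl\"ucker stratification so that the ``constant'' edges at column $n$ and at $\{1,2\}$ do not spuriously contribute to $E$.
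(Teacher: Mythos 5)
Your sufficiency argument is sound and is a genuinely different route from the paper's. You eliminate the coordinate of a degree-two vertex and observe that back-substitution reproduces the scattering potential of the smaller 2-tree with $s_{uw}\mapsto s_{uw}+s_{uv}+s_{vw}$; this triangular elimination is essentially what the paper records only later, in the remark following the proof of Theorem \ref{thm:treeamp}. The paper's Lemma \ref{lem:exhibits} instead proves $|\chi(X_T)|=1$ by exhibiting the removal of the degree-two vertex as a fibration $X_{T'}\to X_T$ with fiber $\mathcal{M}_{0,4}$ and invoking multiplicativity of the Euler characteristic, together with the identification of the ML degree with the signed Euler characteristic. Your route has the advantage of producing the explicit rational critical point along the way; the paper's is shorter and feeds directly into its necessity argument. (One small repair: the simplicial vertex must be chosen in $\{3,\dots,n-1\}$, since vertices $1$ and $2$ carry no coordinate $x_{v-2}$; the vertex added last in the defining construction always works.)

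The necessity direction, however, has a genuine gap, and the specific obstruction you propose is false. You claim that an induced $4$-cycle (or a triangle-free substructure) in $E$ forces the ML degree above one. Counterexample: for $n=5$ take $T=\{13,24,34\}$, so $E=\{12,13,34,24\}$ is an induced $4$-cycle on $[4]$. Then $L_T=s_{13}\log x_1+s_{34}\log(x_2-x_1)+s_{24}\log(x_2-1)$ has exactly one critical point, namely $\hat x_2=(s_{13}+s_{34})/(s_{13}+s_{24}+s_{34})$ and $\hat x_1=\tfrac{s_{13}}{s_{13}+s_{34}}\hat x_2$. So the correct dichotomy is not ``2-tree versus ML degree $>1$'': many non-2-trees have ML degree one; they simply fail to be inclusion-maximal. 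The paper closes this direction with matroid theory: the ML degree equals the number of bounded regions of the associated affine arrangement, i.e.\ the beta invariant of the graphic matroid of $E$ (Zaslavsky, via \cite{LG}); by Brylawski's theorem the beta invariant equals one precisely for series-parallel matroids; and series-parallel graphs on $N$ vertices have at most $2N-3$ edges, with equality exactly for 2-trees (Bodirsky et al.), which is where maximality enters. Your edge count $|E|=2n-5$ (``each elimination consumes two edges'') is also circular, since it presupposes the triangular structure you are trying to establish. Without a result playing the role of Brylawski's characterization, the necessity direction does not close.
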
      

Here, we use the following formal definition of {\em minimal kinematics}.
Fix the set of index pairs $(i,j)$ in (\ref{eq:scattering}) whose
corresponding $ 2 \times 2$ minor in the matrix $X$
is non-constant. This is
\begin{equation}
\label{eq:setS} S \, = \, \bigl\{ (i,j) : 1 \leq i < j \leq n-1  \bigr\} \setminus \{(1,2) \}. 
\end{equation}
For any subset $T$ of $S$, we restrict  to the kinematic subspace where $s_{ij}=0$ for all $ij \in S\setminus T$:
\begin{equation}
\label{eq:scattering2}
 L_T \,\,\, = \,
\sum_{(i,j) \in T} s_{ij} \cdot {\rm log}( p_{ij}),
\end{equation}
We say that $T$ exhibits {\em minimal kinematics} for $\mathcal{M}_{0,n}$
 if  the function (\ref{eq:scattering2}) has exactly one
 critical point, which is hence rational in the  $s_{ij}$, and $T$ is inclusion-maximal with this property.
With this definition, the minimal kinematics for $\mathcal{M}_{0,6}$ in Example \ref{ex:firstchoice} is 
exhibited by the subset
\begin{equation}
\label{eq:firstchoice} T  \, = \, \bigl\{ (1,3), (2,3), \,(1,4), (3,4), \,(1,5), (4,5) \bigr\}. 
\end{equation}
The proof of Theorem \ref{thm:vier} is given in Section \ref{sec2}.
Section \ref{sec3} features the Horn uniformization. Formulas for
the unique critical point of $L_T$ are given in
Theorem \ref{thm:hornmain} and Corollary \ref{cor:hatp}.

In Section \ref{sec4} we introduce an {\em amplitude} $m_T$ for any 2-tree~$T$,
and we compute $m_T$ in Theorem \ref{thm:treeamp}.
The following example explains why we use the term amplitude. It is 
aimed at readers from physics who are familiar with
the {\em biadjoint scalar amplitude}~$m_n$; see \cite[Section~3]{CHYB}.
We recall that $m_n$ is the integral of the {\em Parke-Taylor factor}
$1/(p_{12} p_{23} \cdots p_{n-1,n} p_{n,1})$ over 
the moduli space $\mathcal{M}_{0,n}$, localized to the solutions to the scattering equations
$\nabla L = 0$.

\begin{example}[$n=6$] \label{ex:intro6}
The biadjoint scalar amplitude for 
$\mathcal{M}_{0,6}$ is the rational function
\begin{equation}
\label{eq:m6}
\!\! \begin{matrix}
m_6 & \! = \!\!\, & \frac{1}{s_{12} s_{34} s_{56}}
+\frac{1}{s_{12} s_{56}  s_{123}}
+\frac{1}{s_{23} s_{56}  s_{123}}
+\frac{1}{s_{23} s_{56}  s_{234}}
+\frac{1} {s_{34} s_{56} s_{234}}
+ \frac{1}{s_{16} s_{23} s_{45}}
+ \frac{1}{ s_{12} s_{34} s_{345}} \smallskip  \\ & 
\!\! & \,\, +\, \frac{1}{s_{12} s_{45} s_{123}}
+ \frac{1}{s_{12} s_{45} s_{345}}
+\frac{1}{s_{16} s_{23} s_{234}}
+\frac{1} {s_{16} s_{34} s_{234}}
{+} \frac{1}{s_{16} s_{34} s_{345}}
{+}\frac{1}{s_{16} s_{45} s_{345}}
{+} \frac{1}{s_{23} s_{45} s_{123}}.
\end{matrix}
\end{equation}
Here $s_{ijk} = s_{ij} + s_{ik} + s_{jk}$.  
In physics literature this amplitude is usually denoted $ m(\mathbb{I}_6,\mathbb{I}_6)$,
 where $\mathbb{I}_6 = (123456)$ is the standard cyclic order.
See also \cite[eqn (2.7)]{Cruz} or \cite[eqn (23)]{ST}.

The $14$ summands in (\ref{eq:m6})  correspond to the
vertices of the $3$-dimensional associahedron. The fomula is unique
since the nine planar kinematic invariants which appear form a basis of the dual kinematic space.  
With an eye towards more general situations, to achieve a unique formula
for $m_n$ modulo the relations (\ref{eq:momentumcons}),
we may also use the basis $S$ from (\ref{eq:setS}). Thus, we set
$$ \begin{small} \begin{matrix}
s_{12}\, \,=\,\, -s_{13}-s_{14}-s_{15}-s_{23}-s_{24}-s_{25}-s_{34}-s_{35}-s_{45},
\\ 
s_{16} \,=\, s_{23}+s_{24}+s_{25}+s_{34}+s_{35}+s_{45} ,\,\,
 s_{26} \,=\, s_{13}+s_{14}+s_{15}+s_{34}+s_{35}+s_{45},
 \\ 
  s_{36} \,=\, -s_{13}-s_{23}-s_{34}-s_{35},\,\,
  s_{46} \,=\,  -s_{14}-s_{24}-s_{34}-s_{45},\,\,
  s_{56} \,=\, -s_{15}-s_{25}-s_{35}-s_{45}.
\end{matrix} \end{small}
$$
We now restrict to 
the minimal kinematics in Example \ref{ex:firstchoice}.
For $s_{24} =  s_{25} = s_{35} = 0$, we find
$$ m_6\big\vert_{s_{24}=s_{25}=s_{35}=0} 
	\,\,=\,\,  
	 \frac{
		(s_{13}+s_{14}+s_{15}+s_{34}+s_{45})\,(s_{14}+s_{15}+s_{45})\, s_{15} }{
		s_{23} \,(s_{13}{+}s_{14}{+}s_{15}{+}s_{23}{+}s_{34}{+}s_{45}) \, s_{34}\,   
		(s_{14}{+}s_{15}{+}s_{34}{+}s_{45}) \,		s_{45} \,(s_{15}{+}s_{45})  }. 
$$
Up to relabeling, this is the amplitude $m_{T_1}$
we associate with  the 2-tree $T_1$ in (\ref{eq:twotrees}). 
This illustrates Theorem \ref{thm:treeamp}.
The factors are explained by the Horn matrix
$H_{T_1}$ in Example~\ref{ex:n=6}.
We note that,
modulo momentum conservation (\ref{eq:momentumcons}),
our amplitude  simplifies to
$$
	m_6\big\vert_{s_{24}=s_{25}=s_{35}=0} \,\,=\,\, \frac{\left(s_{12}+s_{23}\right) \left(s_{45}+s_{56}\right) \left(s_{34}+s_{456}\right)}{s_{12} s_{23} s_{34} s_{45} s_{56} s_{456}}.
	$$
\end{example}

In Section \ref{sec5} we venture into territory
that is of great significance for both
algebraic geometry  and particle physics.
Structures we attache to 2-trees generalize naturally to
the hypertrees of Castravet and Tevelev  \cite{CT}.
By \cite[Lemma 9.5.(3)]{Tev}, hypertrees are
equivalent to the on-shell diagrams of 
Arkani-Hamed, Bourjaily, Cachazo, Postnikov and Trnka  \cite{beyond}.
Every hypertree $T$ has an associated amplitude $m_T$.
This rational function does not admit a Horn formula,
like that in (\ref{eq:huhformula}) for 2-trees $T$. Indeed, now the ML degree 
for $T$ is larger than one. Here, we aim to reach minimal kinematics by restricting to
subspaces of kinematic space. This is seen in 
Example \ref{ex:yes}. The paper concludes with questions for future research.

\section{2-trees}
\label{sec2}

We begin by defining the class of graphs referred to in Theorem \ref{thm:vier}.
A {\em 2-tree} is a graph $T$ with $2n-5$ edges on the
 vertex set $[n-1] = \{1,2,\ldots,n-1\}$,
which can be constructed inductively as follows.
We start with the single edge graph $\{12\}$, which is the unique
2-tree for  $n-1=2$. For $k=3,4,5,\ldots,n-1$, we proceed inductively as follows:
we select an edge $ij$ whose vertices $i$ and $j$ are in $\{1,\ldots,k-1\}$,
and we introduce the two new edges $ik$ and $jk$.
There are $2k-5$ choices at this stage, so our process leads to
$\,(2n-7)!! = 1 \cdot 3 \cdot 5 \cdot 7 \cdots (2n-7)\,$ distinct 2-trees.
Of course, many pairs of these 2-trees will be isomorphic as graphs. 

The number of 2-trees up to isomorphism appears 
as the entry A054581 in the Online Encyclopedia of Integer Sequences (OEIS). 
That sequence begins with the counts
$$  1,1, {\bf 2}, 5, 12, 39, 136, 529, 2171, 9368, 41534,\ldots \,\,
{\rm for} \,\,\, n\, =\, 4,5,{\bf 6},7,8,9,10,11,12,13 \ldots . $$
For instance, there are ${\bf 2}$ distinct unlabeled 2-trees for $n={\bf 6}$. Representatives are given~by
\begin{equation}
	\label{eq:twotrees}
	T_1 = \{12,13,23,14,34,15,45\} \quad {\rm and} \quad T_2 = \{12,13,23,24,34,25,35\}. 
\end{equation}
We identify each 2-tree  $T$ with a subset of the set $S$ in (\ref{eq:setS}),
by removing the initial edge $12$. It thus specifies a function (\ref{eq:scattering2}).
For example, the 2-tree $T_1$ in (\ref{eq:twotrees}) is identified with
$T$ in (\ref{eq:firstchoice}).
 
Theorem \ref{thm:vier} has two directions. First,  (\ref{eq:scattering2}) has
only one critical point when $T$ is a 2-tree. Second, every maximal
graph with this property is a 2-tree. We begin with the first direction.

\begin{lemma}  \label{lem:exhibits}
Every 2-tree exhibits minimal kinematics.
\end{lemma}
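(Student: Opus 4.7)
The plan is to induct on $n$, using the recursive definition of 2-trees: for $n \geq 4$, any 2-tree $T$ on $[n-1]$ is obtained from some 2-tree $T'$ on $[n-2]$ by attaching the new vertex $n-1$ as the apex of a triangle over an existing edge $(a,b)$ of $T'$, so that $T = T' \cup \{(a,n{-}1),(b,n{-}1)\}$. In particular, vertex $n-1$ has degree two in $T$, and the variable $x_{n-3}$ appears in $L_T$ only through the two logarithmic terms $s_{a,n-1}\log p_{a,n-1}+s_{b,n-1}\log p_{b,n-1}$. The base case $n=4$ is the elementary calculation in (\ref{eq:simpleformula}).

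For the inductive step I would solve $\partial L_T/\partial x_{n-3}=0$. Because only the two aforementioned logarithms involve $x_{n-3}$, the equation is affine in $x_{n-3}$ and has the unique rational solution
\[
\hat{x}_{n-3} \;=\; \frac{s_{a,n-1}\,y_b + s_{b,n-1}\,y_a}{s_{a,n-1}+s_{b,n-1}},
\]
where I introduce the auxiliary coordinates $y_1=0$, $y_2=1$, and $y_k=x_{k-2}$ for $k\geq 3$, so that $p_{ij}=y_j-y_i$ for every edge $i<j$ of $T$ uniformly, absorbing the special forms of the first two columns of $X$. A brief direct computation then shows that after substitution both $p_{a,n-1}$ and $p_{b,n-1}$ become constant scalar multiples (in the $s$'s) of $p_{ab}$, so the two collapsing logarithms contribute $(s_{a,n-1}+s_{b,n-1})\log p_{ab}$ plus a constant when $(a,b)\neq(1,2)$, and merely a constant when $(a,b)=(1,2)$ since $p_{12}=1$. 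Hence $L_T|_{x_{n-3}=\hat{x}_{n-3}}$ equals $L_{T'}$ up to an additive constant, after the single shift $s_{ab}\mapsto s_{ab}+s_{a,n-1}+s_{b,n-1}$. The induction hypothesis provides a unique critical point of this reduced $L_{T'}$, which together with $\hat{x}_{n-3}$ yields the unique critical point of $L_T$.

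This settles the uniqueness half of ``exhibits minimal kinematics''. For the inclusion-maximality half I would appeal to the converse direction of Theorem \ref{thm:vier}, stated as the second half of that theorem: any subset of $S$ with a unique critical point extends to an inclusion-maximal one, and that maximum is forced to be a 2-tree by that converse; but every 2-tree contributes exactly $2n-6$ edges to $S$, so no proper superset of a given 2-tree can again have ML degree one. The main obstacle in the above plan is not conceptual but bookkeeping: checking that the substituted $p_{a,n-1}$ and $p_{b,n-1}$ factor through $p_{ab}$ uniformly across the cases $a,b\geq 3$, $a\in\{1,2\}$, and $(a,b)=(1,2)$. The $y$-coordinate device collapses these three cases into a single line of algebra, so the induction runs smoothly.
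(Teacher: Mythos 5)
Your argument is correct, but it takes a genuinely different route from the paper. The paper proves the lemma topologically: it identifies the number of critical points of $L_T$ with $|\chi(X_T)|$, exhibits the coordinate projection $X_{T'} \to X_T$ (forgetting $p_{i k},p_{jk}$) as a fibration with fiber $\mathcal{M}_{0,4}$, and concludes $\chi(X_{T'}) = -\chi(X_T) = \pm 1$ by multiplicativity of the Euler characteristic. You instead eliminate the last variable directly: since $x_{n-3}$ enters $L_T$ only through $\log p_{a,n-1}$ and $\log p_{b,n-1}$, the equation $\partial L_T/\partial x_{n-3}=0$ is affine in $x_{n-3}$, and at its unique solution one checks $p_{a,n-1} = \tfrac{s_{a,n-1}}{s_{a,n-1}+s_{b,n-1}}\,p_{ab}$ and $p_{b,n-1} = \tfrac{-s_{b,n-1}}{s_{a,n-1}+s_{b,n-1}}\,p_{ab}$, so the restricted potential is $L_{T'}$ with $s_{ab}$ shifted by $s_{a,n-1}+s_{b,n-1}$ (or unchanged when $ab=12$, since $p_{12}=1$). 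This is exactly the triangularization of the scattering equations that the paper only records later, in the remark following Theorem \ref{thm:treeamp}; your version of the induction has the advantage of simultaneously producing the explicit rational critical point, i.e.\ it essentially re-derives the Horn formula of Corollary \ref{cor:hatp} and explains where the bracketed sums $[s_{ij}]$ come from, at the cost of some bookkeeping (which your $y$-coordinate normalization handles cleanly) and of a genericity check that $s_{a,n-1}+s_{b,n-1}\neq 0$ and that the lifted point stays in the hyperplane-arrangement complement. The topological proof is shorter and plugs directly into the matroid/beta-invariant machinery used for the converse. On inclusion-maximality, note that the paper's own proof of this lemma also only establishes the unique-critical-point property and leaves maximality to the converse direction of Theorem \ref{thm:vier} (via Brylawski's series-parallel characterization and the edge count $2N-3$); your explicit appeal to that converse is therefore consistent with the paper's logic and not circular, since that direction is proved independently.
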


\begin{proof}
Let $m = 2n-6$, and identify the coordinates on $(\CC^*)^m$ with the pairs in $T$.
We write $X_T$ for the $(n-3)$-dimensional subvariety of $(\CC^*)^m$ that is parametrized
by the $2 \times 2$-minors $p_{ij}$ with $(i,j) \in T$. Thus $X_T$ is a very affine variety,
defined as the  complement of an arrangement of $m$ hyperplanes in $\CC^{n-3}$.
The scattering potential (\ref{eq:scattering2}) is the log-likelihood function for $X_T$.
The number of critical points, also known as the ML degree, equals the signed Euler 
characteristic of $X_T$; see \cite{ABFKST, LG,ST}. Therefore, our claim says that $|\chi(X_T)|  = 1$.

To prove this, we use the multiplicativity of the Euler characteristic.
This states that, for any fibration $f: E \to B$, with  fiber $F$, the following relation holds: $\chi(E)=\chi(F) \cdot \chi(B)$.

We proceed by induction on $n$. The base case is $n=4$,
where $T = \{(1,3),(2,3)\}$. This set represents the unique $2$-tree on $[3]$, which is
 the triangle graph with vertices $1,2,3$.
Here, the very affine variety is the affine line $\CC^1$ with two points removed. In symbols, we have
$$ X_T \,=\, \{ \,(x_1,x_1-1) \in \CC^2 \,: \, x_1 \not= 0,1 \,\}
\, = \, \{\,(p_{13},p_{23}) \in (\CC^*)^2 \, :  p_{13} - p_{23} = 1 \,\} \, \simeq \,
 \mathcal{M}_{0,4} . $$
This punctured curve satisfies $\chi(X_T) = -1$, so the base case of our induction is verified.

We now fix $k \geq 4$, and we
assume that $X_T \subset (\CC^*)^{2k-6}$ has Euler characteristic $\pm 1$ for all
 2-trees $T$ with $k-1$ vertices. Note that ${\rm dim}(X_T) = k-3$.
 Let $T'$ be any 2-tree with $k$ vertices.
 The vertex $k$ is connected to exactly two vertices $i$ and $j$. Suppose $i < j$.
 The associated very affine variety $X_{T'}$ lives in $(\CC^*)^{2k-4}$.
 Note that   ${\rm dim}(X_{T'}) = k-2$.

 We write the coordinates on  $(\CC^*)^{2k-4}$ 
 as $(\,p,\,p_{ik}, p_{jk})$, where $p \in (\CC^*)^{2k-6}$.
 Consider the the map  $\,\pi\,:\,(\CC^*)^{2k-4} \rightarrow  (\CC^*)^{2k-6},
 \,(\,p,\,p_{ik}, p_{jk}) \mapsto p\,$ which deletes the last two coordinates.
 
  Let $T$ be the 2-tree without the vertex $k$ and the two edges $ik$ and $jk$. 
  Then $ij$ is an edge of $T$, so $p_{ij}$ is among the coordinates of $p$.
The restriction of $\pi$ to $X_{T'}$ defines a fibration
$$ \pi \,:\, X_{T'} \rightarrow X_T \, , \,\,
 \,(\,p,\,p_{ik}, p_{jk}) \mapsto p . $$
 Indeed, the fiber over $p \in X_T$ equals
  $$ F \, = \, \pi^{-1} (p) \,\, = \,\, \{ \, (p_{ik}, p_{jk}) \in (\CC^*)^2 \,:\, p_{jk} - p_{ik} \, = \, c \,\}, \quad
\hbox{ where $c = p_{ij} \not=0$.} $$
 This is the punctured line above, i.e.~$F \simeq \mathcal{M}_{0,4}$.
 We know that $\chi(F) = -1$.  Using the induction hypothesis, and the
 multiplicativity of Euler characteristic, we conclude that
$$ \chi(X_{T'}) = \chi(F) \cdot \chi(X_T)  \,= \,
- \chi(X_T) \, = \, \pm 1. $$
This means that the 2-tree $T'$ exhibits  minimal kinematics,
and the lemma is proved.
\end{proof}

From the induction step in the proof above, we also see how to write down
the three-term linear equations that define $X_T$.
We display these linear equations  for the 2-trees with $n=6$.

\begin{example}[$n=6$] \label{ex:VVV}
Consider the two 2-trees in (\ref{eq:twotrees}). 
Each of them defines a very affine threefold of Euler characteristic $-1$.
Explicitly, these two threefolds are given as follows:
$$ \begin{matrix} X_{T_1} & = &  V(\,p_{13} - p_{23}- 1,\, p_{14} - p_{34}- p_{13},\, p_{15} - p_{45}-p_{14}\,) 
& \subset \,\, (\CC^*)^6,
\smallskip \\
 X_{T_2} & = &  V(\,p_{13}-p_{23}-1, \, p_{24}-p_{34}-p_{23} , \, p_{25}-p_{35}-p_{23}\,)
 & \subset \,\, (\CC^*)^6. 
 \end{matrix} $$
\end{example}

We now come to the converse direction, which asserts that
2-trees are the only maximal graphs $T$ satisfying $\chi (X_T) = \pm 1$.
This will follow from known results on graphs and matroids.

\begin{proof}[Proof of Theorem \ref{thm:vier}]
Consider any arrangement of $m+1$
hyperplanes in the real projective space $\PP^d$,
such that the intersection of all hyperplanes is empty.
This data defines a matroid $M$ of rank $d+1$ on $m+1$ elements.
The complement of the hyperplanes is a very affine variety $X$
of dimension $d$. Let $\RR^d$ be the affine space
obtained from $\PP^d$ by removing any one of the hyperplanes.
We are left with an arrangement of $m$ hyperplanes in $\RR^d$.
By \cite[Theorem 1.20]{LG},
 the number of bounded regions
in that affine arrangement is equal to $|\chi(X)|$.

The number of bounded regions described above depends
only on the matroid $M$, and it is known as  the {\em beta-invariant}.
 This is a result due to Zaslavsky \cite{Zas}. The beta invariant can be 
computed by substituting $1$ into the reduced characteristic polynomial of~$M$;
see \cite{Oxl}.

In our situation, we are considering arrangements of hyperplanes
in $\RR^d$ of the special types $\{x_i=0\}$, $\{x_j=1\}$, or $\{x_k=x_l\}$.
These correspond to graphic matroids, and here the
characteristic polynomial is essentially the chromatic polynomial
of the underlying graph. Our problem is this: for which graphs
does this affine hyperplane arrangement have precisely one
bounded region? Or, more generally, which matroids have
beta-invariant equal to one?

The answer to this question was given by Brylawski \cite[Theorem 7.6]{Bry}:
the beta-invariant of a matroid $M$ equals one
if and only if  $M$ is series-parallel.
This means that $M$ is a graphic matroid, where the
graph is series-parallel.
We finally cite the following from Bodirsky et al.~\cite[page 2092]{BGKN}:
{\em A series-parallel graph on $N$ vertices has at most $2N - 3$ edges. 
Those having this number of edges are precisely the 2-trees.}
Setting $N=n-1$,
we can now conclude that the 2-trees $T$ are
the only subsets of $S$ that exhibit minimal kinematics for $\mathcal{M}_{0,n}$.
\end{proof}

\section{Horn matrices}
\label{sec3}

A remarkable theorem due to June Huh \cite{Huh} characterizes 
very affine varieties $X \subset (\CC^*)^m$ that have maximum likelihood degree one. The unique
critical point $\hat p$ of the log-likelihood function on $X$ is given by the Horn uniformization,
due to  Kapranov~\cite{Kap}.
Huh's result was adapted to the setting of algebraic statistics by
Duarte et al.~in \cite{DMS}. For an exposition see also \cite[Section 3]{LG}.
The {\em Horn uniformization} can be written in concise notation as follows:
\begin{equation}
\label{eq:hornuni}
 \hat p \,=\, \lambda \star (H s)^H. 
\end{equation}
Here, $H$ is an integer matrix with $m$ columns and
$\lambda$ is a vector in $\ZZ^m$. The pair $(H,\lambda)$ is
an invariant of the variety $X$, referred to as the {\em Horn pair} in \cite{DMS}.
The coefficients $s_{ij}$ in the log-likelihood function (aka~Mandelstam invariants)
form the column vector $s$ of length $m$, so 
$Hs$ is a vector of linear forms in the coordinates of $s$.
The notation $(Hs)^H$ means that we regard each column of $H$
as an exponent vector, and we form $m$ Laurent monomials in
the linear forms $H s$. Finally, $\star$ denotes the Hadamard product
of two vectors of length $m$.  

The formula  for $\hat p$ given in (\ref{eq:hornuni}) is elegant, but it requires
getting used to. We encourage our readers to work through
Example \ref{ex:n=6}, where
$(Hs)^H$ is shown for two Horn matrices $H$.

We now present the main result of this section, namely the
construction of the {\em Horn matrix} $H = H_T$ for
the very affine variety $X_T$ associated to any 2-tree $T$ on $[n-1]$.
The matrix $H_T$ has $3n-9$ rows and  $m = 2n-6$ columns,
one for each edge of $T$. It is constructed inductively as follows.
If $n=4$ with $T = \{13,23\}$, which represents the triangle graph, then
$$
H_T \,\, = \,\,
\begin{bNiceMatrix}[first-row,first-col]
& 13 & 23 \\
& 1  & 0  \\
& 0  & 1  \\
& -1  & \!-1 
\end{bNiceMatrix}.
$$

Now, for $k \geq 5$, let $T'$ be any 2-tree with $k$ vertices,
where vertex $k$ is connected to $i$ and $j$, and $T = T' \,\backslash \{ik,jk\}$
as in the proof of Lemma \ref{lem:exhibits}. Then the Horn matrix for $T'$ equals
$$
H_{T'}\,\,\, = \quad
\begin{bNiceMatrix}[first-row,first-col]
  &  2k{-}8 \,\, {\rm columns} & ik & jk \,\, \\
3k{-12} \,{\rm rows} & H_T & {\bf h}_{ij} & {\bf h}_{ij} \,\,\,\\
& {\bf 0} & 1  & 0  \,\,\,\\
& {\bf 0} & 0  & 1  \,\,\,\\
& {\bf 0} & -1  & \!-1  \,\,\,
\end{bNiceMatrix},
$$
where ${\bf 0}$ is the zero row vector
and ${\bf h}_{ij}$ is the column of $H_T$ that is indexed by the edge $ij$.

\begin{theorem} \label{thm:hornmain}
Given any 2-tree $T$ on $[n-1]$, the $(3n-9) \times (2n-6)$ matrix $H_T$ constructed above
equals the Horn matrix $H $ for the very affine variety $X_T$.
There exists a sign vector $\lambda \in \{-1,+1\}^{2n-6}$ such that
(\ref{eq:hornuni}) is the unique critical point $\hat p$ of the scattering potential
(\ref{eq:scattering2}).
\end{theorem}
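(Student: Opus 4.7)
The plan is to prove this by induction on the number of vertices of the 2-tree, paralleling the recursive construction of $H_T$. For the base case $n=4$, $T = \{13, 23\}$, the variety $X_T$ is the punctured line $p_{13} - p_{23} = 1$, whose unique critical point is $\hat p_{13} = s_{13}/(s_{13}+s_{23})$ and $\hat p_{23} = -s_{23}/(s_{13}+s_{23})$. A direct computation verifies that this is reproduced by (\ref{eq:hornuni}) with the displayed $3 \times 2$ matrix and $\lambda = (-1, +1)$.

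For the inductive step, let $T'$ be obtained from $T$ by adjoining the new vertex $k$ with edges $ik, jk$ (say $i < j$). Following the fibration $\pi \colon X_{T'} \to X_T$ of Lemma \ref{lem:exhibits}, I would eliminate $p_{jk} = p_{ik} - p_{ij}$ and write
$$L_{T'} \;=\; L_T \,+\, s_{ik}\log p_{ik} \,+\, s_{jk}\log(p_{ik} - p_{ij}).$$
The derivative in $p_{ik}$ decouples and solves explicitly to
$$\hat p_{ik} \;=\; \frac{s_{ik}\, p_{ij}}{s_{ik}+s_{jk}}, \qquad \hat p_{jk} \;=\; -\frac{s_{jk}\, p_{ij}}{s_{ik}+s_{jk}}.$$
Substituting back into the remaining critical equations changes only the $p_{ij}$-equation, and the net effect is to replace the parameter $s_{ij}$ by $\widetilde{s}_{ij} := s_{ij} + s_{ik} + s_{jk}$. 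Thus the reduced system on $X_T$ is exactly the critical system of $L_T$ with $s_{ij} \mapsto \widetilde{s}_{ij}$, whose unique solution is $\lambda_T \star (H_T \widetilde{s})^{H_T}$ by the induction hypothesis.

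The combinatorial core of the argument is then the identity
$$(H_{T'} s')_h \;=\; (H_T s)_h \,+\, (s_{ik}+s_{jk})(H_T)_{h,\, ij} \;=\; (H_T \widetilde{s})_h$$
holding for every row $h$ in the $H_T$-block of $H_{T'}$, which is immediate because both new columns of $H_{T'}$ carry $\mathbf{h}_{ij}$ above the bottom $3 \times 2$ block $\bigl(\begin{smallmatrix} 1 & 0 \\ 0 & 1 \\ -1 & -1 \end{smallmatrix}\bigr)$. Consequently the Horn monomial of $H_{T'}$ at any old edge $(a,b) \in T$ equals the Horn monomial of $H_T$ at $(a,b)$ under $s \mapsto \widetilde{s}$, producing the correct $\hat p_{ab}$. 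The two new columns contribute the factors $-s_{ik}/(s_{ik}+s_{jk})$ and $-s_{jk}/(s_{ik}+s_{jk})$ from the bottom block; matching these against the explicit values above forces the sign recursion $\lambda^{T'}_{ik} = -\lambda^{T}_{ij}$ and $\lambda^{T'}_{jk} = \lambda^{T}_{ij}$, with $\lambda^{T'}_{ab} = \lambda^{T}_{ab}$ for all old edges. This produces a well-defined $\lambda \in \{-1, +1\}^{2n-6}$, and uniqueness of the critical point is inherited directly from Theorem \ref{thm:vier}. The main conceptual obstacle is recognizing that the bordered block structure of $H_{T'}$ is designed precisely to implement the fiberwise substitution $s_{ij} \mapsto \widetilde{s}_{ij}$; once this is seen, the rest of the proof is routine bookkeeping of monomials and signs.
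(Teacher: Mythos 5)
Your proof is correct, but it takes a genuinely different route from the paper. The paper invokes Huh's characterization of ML degree one directly: it computes the left kernel $A$ of $H_T$, observes that $X_A$ is a linear space whose dual variety $\Delta_A$ is the linear space cut out by the trinomials $q_{3i+1}+q_{3i+2}+q_{3i+3}=0$, checks that the monomial map with exponents $H_T$ carries $\Delta_A$ onto $X_T$ after a sign twist by $\lambda$, and then obtains the critical-point formula from the parametrization $s\mapsto H_Ts$ of $\Delta_A$. You instead run a direct induction along the fibration $\pi\colon X_{T'}\to X_T$ of Lemma~\ref{lem:exhibits}: the critical equation in the fiber coordinate $p_{ik}$ decouples, and back-substitution replaces $s_{ij}$ by $s_{ij}+s_{ik}+s_{jk}$ in the reduced system, which is exactly what the bordered block structure of $H_{T'}$ encodes. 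I checked the computations: the fiberwise solution, the identity $(H_{T'}s')_h=(H_T\widetilde{s})_h$ on the $H_T$-block, the contribution $\pm s_{\bullet k}/(s_{ik}+s_{jk})$ of the new columns, and the sign recursion $\lambda^{T'}_{ik}=-\lambda^{T}_{ij}$, $\lambda^{T'}_{jk}=\lambda^{T}_{ij}$ (with base value $\lambda=(-1,+1)$) are all right. What each approach buys: the paper's argument explains \emph{why} the answer has Horn form by tying it to the $A$-discriminant machinery of Kapranov--Huh, and simultaneously re-proves that the ML degree is one; your argument is more elementary and self-contained, produces the sign vector $\lambda$ by an explicit recursion rather than an existence claim, yields Corollary~\ref{cor:hatp} as an immediate byproduct, and isolates the substitution $s_{ij}\mapsto s_{ij}+s_{ik}+s_{jk}$ that the paper itself reuses later in the Hessian computation for Theorem~\ref{thm:treeamp}. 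One small caveat: strictly speaking you verify that $\lambda\star(H_Ts)^{H_T}$ \emph{is} the unique critical point (uniqueness coming from Lemma~\ref{lem:exhibits}, or equally from your own triangular elimination), which is the operational content of the theorem; identifying $H_T$ as ``the'' Horn matrix in the sense of the Horn pair of \cite{DMS} is exactly what this amounts to, so nothing is missing.
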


Before proving this theorem, we illustrate the construction of $H_T$ and the statement.

\begin{example}[$n=6$] \label{ex:n=6}
We consider the two 2-trees that are shown in (\ref{eq:twotrees}).
In each case, the Horn matrix has nine rows and six columns.
We find that the two Horn matrices are
$$
H_{T_1} = 
\begin{bNiceMatrix}[first-row,first-col]
& 13 & 23  & 14 & 34 & 15 & 45 \\
& 1  & 0    &  1  &   1 &  1  &  1  \\
& 0  & 1    &  0  &   0 &  0  &  0  \\
& -1  & \!-1 &  \!-1 &   \!-1 &  \!-1  & \!-1  \\
& 0  & 0    &  1  &   0 &  1  &  1  \\
& 0  & 0    &  0  &   1 &  0  &  0  \\
& 0  & 0    &  \!-1  &  \!-1 &  \!-1  & \! -1  \\
& 0  & 0    &  0  &   0 &  1  &  0  \\
& 0  & 0    &  0  &   0 &  0  &  1  \\
& 0  & 0    &  0  &   0 &  \!-1  &  \!-1  \\
\end{bNiceMatrix}
\quad {\rm and} \quad
H_{T_2} = 
\begin{bNiceMatrix}[first-row,first-col]
& 13 & 23  & 24 & 34 & 25 & 35 \\
& 1  & 0    &  0  &   0 &  0  &  0  \\
& 0  & 1    &  1  &   1 &  1  &  1  \\
& -1  & \!-1 & \! -1 & \!  -1 & \! -1  & \! -1  \\
& 0  & 0    &  1 &   0 &  0  &  0  \\
& 0  & 0    &  0  &  1 &  0  &  0  \\
& 0  & 0    & \! -1  & \! -1 &  0  &  0  \\
& 0  & 0    &  0  &   0 &  1  &  0  \\
& 0  & 0    &  0  &   0 &  0  &  1  \\
& 0  & 0    &  0  &   0 &  \! -1  & \! - 1 \\
\end{bNiceMatrix}.
$$
For $H = H_{T_i}$,  the column vector $Hs $ has nine entries,
each a linear form in six $s$-variables. Each column of $H$ specifies 
an alternating product of these linear forms,
and these are the entries of $(Hs)^H$. By adjusting signs
when needed, we obtain the  six coordinates of $\hat p$.

For the second 2-tree $T_2$, the six coordinates of the critical point $\hat p$ are
$$ 
\begin{matrix}
\hat{p}_{13} & = & \frac{s_{13}}{s_{13} + s_{23} + s_{24} + s_{34} + s_{25} +  s_{35}} & & 
\hat{p}_{23} & = & - \frac{s_{23} + s_{24} + s_{25} + s_{34} + s_{35}}{s_{13} + s_{23} + s_{24} + s_{34} + s_{25} +  s_{35}}  
\smallskip \\
\hat{p}_{24} & = & - \frac{(s_{23} + s_{24} + s_{25} + s_{34} + s_{35}) s_{24}}{(s_{13} + s_{23} + s_{24} + s_{34} + s_{25} +  s_{35})(s_{24}+s_{34})}
 & & 
\hat{p}_{34} & = &  \frac{(s_{23} + s_{24} + s_{25} + s_{34} + s_{35}) s_{34}}{(s_{13} + s_{23} + s_{24} + s_{34} + s_{25} +  s_{35})(s_{24}+s_{34})}
\smallskip \\
\hat{p}_{25} & = & - \frac{(s_{23} + s_{24} + s_{25} + s_{34} + s_{35}) s_{25}}{(s_{13} + s_{23} + s_{24} + s_{34} + s_{25} +  s_{35})(s_{25}+s_{35})}
 & & 
\hat{p}_{35} & = &  \frac{(s_{23} + s_{24} + s_{25} + s_{34} + s_{35}) s_{35}}{(s_{13} + s_{23} + s_{24} + s_{34} + s_{25} +  s_{35})(s_{25}+s_{35})}
\end{matrix}
$$
For the first 2-tree $T_1$, the six coordinates of the critical point $\hat p$ are
$$ \!
\begin{matrix}
\hat p_{13} &\! \!= \!& \frac{s_{13} + s_{14}  + s_{34}+ s_{15}  + s_{45}}{s_{13} + s_{23} + s_{14} + s_{34} + s_{15}   + s_{45}}
& \! \! &
\hat p_{23} &\! \!=\! & - \frac{s_{23}}{s_{13} + s_{23} + s_{14} + s_{34} + s_{15}   + s_{45}} \smallskip \\
\hat p_{14} &\!\! \! =\! \!\!& \frac{(s_{13} + s_{14}  + s_{34}+ s_{15}  + s_{45}) (s_{14} + s_{15} + s_{45})}{(s_{13} + s_{23} + s_{14} + s_{34} + s_{15}   + s_{45})(s_{14} + s_{15} + s_{34} + s_{45})} &\! \! &
\hat p_{34} &\!\!=\!&\!\!\! - \frac{ (s_{13} + s_{14} + s_{34}+ s_{15} + s_{45}) s_{34}}
{(s_{13} + s_{23} + s_{14} + s_{34} +  s_{15}  + s_{45}) (s_{14}  + s_{34} + s_{15} +  s_{45}) }
\end{matrix} \vspace{-0.1in}
$$
$$ 
\begin{matrix}
\hat p_{15} & \! = \! &  \phantom{-}
\frac{( s_{13} +s_{14} + s_{34} + s_{15}  + s_{45}) (s_{14} + s_{15} + s_{45}) s_{15}}{(s_{13} + s_{23}+ s_{14}  + s_{34} + s_{15} + s_{45}) (s_{14} +  s_{34} + s_{15}  + s_{45})(s_{15} + s_{45})}
\smallskip
\\
\hat p_{45} & \! = \! &  -\frac{( s_{13} +s_{14} + s_{34} + s_{15}  + s_{45}) (s_{14} + s_{15} + s_{45}) s_{45}}{(s_{13} + s_{23}+ s_{14}  + s_{34} + s_{15} + s_{45}) (s_{14} +  s_{34} + s_{15}  + s_{45})(s_{15} + s_{45})}
\end{matrix}
$$
We note that these $\hat p_{ij}$ satisfy the trinomial equations given for
$X_{T_1}$ resp.~$X_{T_2}$ in Example~\ref{ex:VVV}.
\end{example}

\begin{proof}[Proof of Theorem \ref{thm:hornmain}]
We start by reviewing the Horn uniformization (\ref{eq:hornuni}) in the version
 proved by Huh  \cite{Huh}.
Let $X \subset (\CC^*)^m$ be any very affine variety. The following are~equivalent:
\begin{enumerate}
    \item[(i)] The variety $X$ has maximal likelihood (ML) degree 1;
    \item[(ii)] There exists $\lambda=(\lambda_1, \ldots,\lambda_m) \in (\CC^*)^m$ and a matrix
          $H = (h_{ij})$ in $ \ZZ^{\ell \times m}$ with zero column sums and left kernel $A$,
                      such that the monomial map 
               $$ (\CC^*)^\ell \,\to\, (\CC^*)^m, \,\,
               \,\mathbf{q}=(q_1,\ldots,q_\ell) \,\mapsto \,\bigl(\lambda_1 \underset{i=1}{\overset{\ell}{\prod}}q_i^{h_{i1}},\,
                     \ldots,\,\lambda_m \underset{i=1}{\overset{\ell}{\prod}}q_i^{h_{im}} \bigr)$$ 
                     maps the $A$-discriminantal variety $\Delta_A $ dominantly onto $X$.
 \end{enumerate}
 
Constructing a Horn uniformization proves that the variety $X$ has ML degree $1$.
The corresponding Horn map (\ref{eq:hornuni})
is precisely the unique critical point of the log-likelihood function.
 The following argument thus also serves as an alternative proof of Theorem \ref{thm:vier}.

Fix a 2-tree $T$ on $[n-1]$, with associated variety $X= X_T $ in $ (\CC^*)^m$, where $m = 2n-6$.
Let $H = H_T$ be the $(3n-9) \times (2n-6)$ matrix constructed above.
Its left kernel is given by 
$$ A \quad = \quad \begin{bmatrix}
 1 & 1 & 1 & 0 & 0 & 0 & \cdots & 0 & 0 & 0 \\
 0 & 0 & 0 & 1 & 1 & 1 & \cdots & 0 & 0 & 0 \\
  \vdots & \vdots & \vdots & & & & \ddots & &  \\
  0 & 0 & 0 & 0 & 0 & 0 & \cdots & 1 & 1 & 1 \\
\end{bmatrix}.
$$
This matrix has $n-3$ rows. Its toric variety is the
$(n-4)$-dimensional linear space
$$ X_A \,\,=\,\,\bigl\{ \,(\,t_1,t_1,t_1,\,t_2,t_2,t_2,\,\ldots,\,t_{n-3},t_{n-3},t_{n-3}\,) \,:\,
t_1,t_2,\ldots,t_{n-3} \in (\CC^*)^{3n-9}\,\bigr\}  \quad {\rm in}  \,\, \,\PP^{3n-10}. $$
The $A$-discriminantal variety is the variety projectively dual to $X_A$. Here it is the linear~space
 $$ \Delta_A \,\,\, = \,\,\,
  \bigl\{\,
 q =  (q_1,q_2,\ldots, q_{3n-9}) \ : \ q_{3i+1}+q_{3i+2}+q_{3i+3}=0 
  \,\,\,{\rm for} \,\,\, i = 0,1,\ldots,n-4 \,\bigr\}. $$
  Disregarding $\lambda$ for now,
the monomial map in item (2) above takes $q \in \Delta_A$ to the vector
$$
   p  \,\,= \,\, \bigl(\,p_{13}, p_{23},\,\ldots \,,\,p_{ik} \,,\,\, p_{jk}, \,\ldots \,\, \bigr) 
 \,\,    = \,\, \biggl(\,
    \frac{q_1}{q_3}, \,\frac{q_2}{q_3}, \, \ldots \,,\,
    \,p_{ij} \cdot \frac{q_{3k-8}}{q_{3k-6}}\,,\,\, p_{ij} \cdot \frac{q_{3k-7}}{q_{3k-6}} ,\, \ldots \biggr).
$$
Using the trinomial equations that define $\Delta_A$, we can write this as follows
$$ p \,\, = \,\, \biggl(
\frac{q_1}{-(q_1+q_2)}, \frac{q_2}{-(q_1+q_2)},\,\ldots \,,\,\,
p_{ij} \cdot \frac{q_{3k-8}}{-(q_{3k-8}+q_{3k-7})}\,,\,\,p_{ij} \cdot \frac{q_{3k-7}}{-(q_{3k-8}+q_{3k-7})}, \,
\ldots\, \biggr). $$
The above vector $p$ satisfies the equation
$\,p_{13} + p_{23} + 1 = 0\,$ and it satisfies all subsequent equations
$p_{ik} + p_{jk} + p_{ij} = 0$ that arise from the construction of the 2-tree $T$.

In a final step, we need to adjust the signs of the coordinates in order for $p$
to satisfy the equations $p_{ik} - p_{jk} - p_{ij} = 0$ that cut out $X_T$;
see e.g.~Example \ref{ex:VVV}. This is done by replacing $p$
with the Hadamard product $\hat p = \lambda \star p$ for an appropriate sign vector
$\lambda \in \{-1,+1\}^{2n-6}$. This now gives the desired
birational map from $\Delta_A$ onto $X_T$. That map furnishes the rational formula for 
the unique critical point $\hat p$. To obtain the version
$\,\hat p \, =\, \lambda \star (Hs)^H$ seen in (\ref{eq:hornuni}), we note that
the map $s \mapsto Hs$ parametrizes the linear space $\Delta_A$.
This completes the proof.
\end{proof}

We conclude this section by making the coordinates of $\hat p= \lambda \star (Hs)^H$ more 
explicit. Given any 2-tree $T$ and any edge $ij$ of $T$, we write
$[s_{ij}]$ for the sum of all Mandelstam invariants $s_{lm}$ 
where $lm$ is any descendent of the edge $ij$ in $T$.
Here {\em descendent} refers to the transitive closure of
the parent-child relation in the iterative construction of $T$:
the new edges $ik$ and $jk$ are children of the old edge $ij$.
In this case we call $\{i,j,k\}$ a {\em triangle} of the 2-tree $T$.
This triangle is an {\em ancestral triangle} of an edge $lm$ of $T$
if $lm$ is a descendant of the edge $ik$.
The entries of the vector $H s$ are the linear forms $[s_{ij}]$ 
and their negated sums $-[s_{ik}] - [s_{jk}]$.

\begin{corollary}\label{cor:hatp}
The evaluation of
 the Plücker coordinate $p_{lm}$ at the critical point of $L_T$~equals
  \begin{equation}
  \label{eq:hatp}
   \hat{p}_{lm}\,\,=\,\, \pm \,\underset{}{\prod} \frac{[s_{ik}]}{[s_{ik}]+[s_{jk}]},
  \end{equation}
  where the product runs over all ancestral triangles
 $\{i,j,k\}$ of the edge $lm$.
 \end{corollary}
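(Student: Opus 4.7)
The plan is to derive the formula by reading off the columns of the Horn matrix $H_T$ produced by Theorem \ref{thm:hornmain}, and then matching the resulting monomial with the combinatorial description in terms of ancestral triangles. The proof rests on two inductive observations about $H_T$ that follow directly from its recursive construction.

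First, I would show that for every triangle $\{i,j,k\}$ of $T$, where $k$ is the new vertex added on the base edge $ij$, the three rows of $H_T$ contributed by $\{i,j,k\}$ give rise to the linear forms $[s_{ik}]$, $[s_{jk}]$ and $-[s_{ik}]-[s_{jk}]$ (in that order) in the vector $H_T s$. This is proved by induction on the order in which triangles are added. At the moment $\{i,j,k\}$ is born, only the columns of $ik$ and $jk$ are non-zero in its three rows, so the linear forms are $s_{ik}$, $s_{jk}$ and $-s_{ik}-s_{jk}$; at that moment these are exactly $[s_{ik}]$, $[s_{jk}]$ and $-[s_{ik}]-[s_{jk}]$ because $ik$ and $jk$ have no descendants yet. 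Whenever a subsequent triangle is grafted onto a descendant of $ik$ (respectively $jk$), the two new columns inherit the $(1,0,-1)$-block of their base edge, so the two new $s$-variables are added to the first (respectively second) of the three linear forms and their negated sum to the third. This keeps the three linear forms equal to $[s_{ik}]$, $[s_{jk}]$ and $-[s_{ik}]-[s_{jk}]$ throughout the construction.

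Second, I would show that the column $\mathbf{h}_{lm}$ of $H_T$ is zero outside the rows belonging to the ancestral triangles of $lm$, and that in the rows of an ancestral triangle $\{i,j,k\}$ where $lm$ is a descendant of $ik$, the three entries of $\mathbf{h}_{lm}$ are exactly $(1,0,-1)$ (and $(0,1,-1)$ in the complementary case of $lm$ descending from $jk$). This is an immediate induction on how $lm$ is built up from the starting triangle: the column $\mathbf{h}_{lm}$ inherits from the column of its parent edge in all older rows, so in the rows of an older ancestral triangle $\{i,j,k\}$ the entries of $\mathbf{h}_{lm}$ coincide with the entries of the column of that ancestor of $lm$ which was created in $\{i,j,k\}$, namely $(1,0,-1)$ or $(0,1,-1)$.

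Combining the two observations, the contribution to $(H_T s)^{H_T}$ from the three rows of a single ancestral triangle $\{i,j,k\}$ in the column of $lm$ is
\[
[s_{ik}]^{\,1}\cdot [s_{jk}]^{\,0}\cdot \bigl(-[s_{ik}]-[s_{jk}]\bigr)^{-1} \,=\, -\,\frac{[s_{ik}]}{[s_{ik}]+[s_{jk}]}.
\]
Taking the product over all ancestral triangles of $lm$ and collecting the overall sign into $\lambda_{lm}$ yields the formula (\ref{eq:hatp}). The main obstacle is purely bookkeeping: keeping the labelling consistent through the induction, in particular making sure that ``ancestor of $lm$ created in $\{i,j,k\}$'' and ``descendant of $ik$ versus $jk$'' remain compatible when a single base edge supports several child triangles.
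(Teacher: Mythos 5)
Your proposal is correct and follows the same route as the paper, which disposes of this corollary in one line (``the proof is given by inspecting the Horn matrix $H_T$''); you simply carry out that inspection explicitly, and your two inductive observations about the rows and columns of $H_T$ are exactly the right bookkeeping. Both claims check out against the displayed matrices $H_{T_1}$, $H_{T_2}$ and the formulas for $\hat p$ in Example \ref{ex:n=6}, including the case where one base edge supports several child triangles.
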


This is a corollary to Theorem \ref{thm:hornmain}.
The proof is given by inspecting the Horn matrix~$H_T$.
It is instructive to  rewrite the rational functions $\hat p_{ij}$
in Example \ref{ex:n=6} using the notation (\ref{eq:hatp}).

\section{Amplitudes}
\label{sec4}

In this section we take a step towards particle physics.
We define an amplitude $m_T$ for any 2-tree $T$.
This is a rational function in the Mandelstam invariants $s_{ij}$.
When $T$ is planar,
$m_T$ is a degeneration of the biadjoint scalar amplitude $m_n$. We saw this
in Example~\ref{ex:intro6}.
The article \cite{CE} introduced minimal kinematics  
as a means to  study such degenerations.

We shall express the amplitude $m_T$ in terms of the Horn matrix~$H=H_T$.
Given any 2-tree $T$, the entries of the column vector $s$
are the $2n-6$ Mandelstam~invariants~$s_{ij}$. The entries of the
column vector $H s$ are $3n-9$ linear forms in $s$.
Here is our main result:

\begin{theorem} \label{thm:treeamp}
Fix a 2-tree $T$ on $[n-1]$.
The amplitude $m_T$ associated with $T $ equals
\begin{equation} \label{eq:huhformula} m_T \,\, = \,\, \prod_{\{i<j<k\}} 
\frac{[s_{ik}] + [s_{jk}]}{ [s_{ik}] \cdot  [s_{jk}] }. \end{equation}
The product is over all triangles in $T$.
This rational function of degree $3-n$ is
the product of $n-3$ linear forms in $Hs$ divided by the
product of the other $2n-6$ linear forms in $Hs$.
\end{theorem}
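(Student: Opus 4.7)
The plan is to prove the product formula by induction on $n$, in parallel with the recursive constructions of 2-trees (Section \ref{sec2}) and their Horn matrices $H_T$ (Section \ref{sec3}).

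For the base case $n=4$, the unique 2-tree on $[3]$ is the triangle $\{1,2,3\}$, for which $[s_{13}]=s_{13}$ and $[s_{23}]=s_{23}$. One verifies directly that the amplitude equals $(s_{13}+s_{23})/(s_{13}\,s_{23})$, matching the formula. This is the $\mathcal{M}_{0,4}$ factor that will drive the induction.

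For the inductive step, let $T'$ on $[n-1]$ be obtained from a 2-tree $T$ on $[n-2]$ by attaching a new vertex $n-1$ to an existing edge $ij$, creating the new triangle $\{i,j,n-1\}$. The heart of the argument is the multiplicative identity
\[
m_{T'} \,\,=\,\, m_T \cdot \frac{[s_{i,n-1}]+[s_{j,n-1}]}{[s_{i,n-1}]\,[s_{j,n-1}]}.
\]
The Horn matrix $H_{T'}$ is built from $H_T$ by adjoining exactly three new rows (for $[s_{i,n-1}]$, $[s_{j,n-1}]$, and $-[s_{i,n-1}]-[s_{j,n-1}]$) and two new columns (indexed by $p_{i,n-1}$ and $p_{j,n-1}$), leaving every $[s]$-form inherited from $H_T$ untouched. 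I would establish the factorization via the fibration $\pi\colon X_{T'}\to X_T$ from the proof of Lemma \ref{lem:exhibits}, whose fiber is $\mathcal{M}_{0,4}$. Since $X_{T'}$ and $X_T$ both have ML degree one, localizing the biadjoint integrand at the unique critical point $\hat{p}$ of $L_{T'}=L_T+s_{i,n-1}\log p_{i,n-1}+s_{j,n-1}\log p_{j,n-1}$ (given by Corollary \ref{cor:hatp}) splits into a base contribution equal to $m_T$ by the inductive hypothesis, and a fiber contribution equal to the base-case factor applied to the new triangle.

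The counting statement follows immediately: each of the $n-3$ triangles of $T$ produces one numerator form and two denominator forms in $Hs$, totaling $3(n-3)=3n-9$ forms that exhaust the rows of $H_T$, and the degree is $(n-3)-(2n-6)=3-n$. The main technical obstacle I anticipate is the rigorous justification of the Parke-Taylor/fiber factorization: one must show that the Hessian of $L_{T'}$ at $\hat{p}$ becomes block-triangular along the fibration, so that its determinant decomposes as the Hessian determinant for $L_T$ times a $2\times 2$ fiber block whose determinant is the parent-sum $[s_{i,n-1}]+[s_{j,n-1}]$. This should follow from the explicit structure of $L_{T'}$, where the new Plücker coordinates $p_{i,n-1},p_{j,n-1}$ appear only in the two added log-terms and couple to the old coordinates exclusively through the single fiber relation $p_{j,n-1}-p_{i,n-1}=p_{ij}$ (cf.\ Example \ref{ex:VVV}).
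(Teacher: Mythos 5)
Your strategy---a single upward induction on the number of vertices, multiplying in one triangle factor
\(\bigl([s_{i,n-1}]+[s_{j,n-1}]\bigr)/\bigl([s_{i,n-1}]\,[s_{j,n-1}]\bigr)\) at a time---is genuinely different in organization from the paper's proof. The paper instead computes the numerator \((\mathcal{I}_T)^2(\hat p)\) in closed form via a combinatorial identity on descendant counts (equation (\ref{eq:keyobs})), and separately establishes the closed form (\ref{eq:nicedenominator}) for \(\det\mathrm{Hess}(L_T)(\hat p)\) by a downward elimination that peels off the last vertex. Your route, if completed, would bypass the combinatorial lemma entirely by keeping numerator and denominator packaged together in the ratio \(m_T\); your base case and the counting statement are correct. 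However, two points in your inductive step are wrong or missing as written.

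First, adding one vertex adds one affine coordinate, so the Hessian of \(L_{T'}\) gains one row and one column, not two: the fiber block is \(1\times 1\), and after the row reduction its entry is
\(\frac{([s_{i,n-1}]+[s_{j,n-1}])^3}{[s_{i,n-1}]\,[s_{j,n-1}]}\cdot\hat p_{ij}^{-2}\)
(see (\ref{eq:Hkk})), not the ``parent-sum.'' The factor \(\hat p_{ij}^{-2}\) cannot be dropped; it is exactly cancelled by the \(\hat p_{ij}^{\,2}\) appearing in the ratio \((\mathcal{I}_{T'})^2/(\mathcal{I}_T)^2 = \hat p_{ij}^{\,2}\,\hat p_{i,n-1}^{-2}\,\hat p_{j,n-1}^{-2}\) coming from Corollary \ref{cor:assint}, and only after that cancellation (using (\ref{eq:terminal}) to evaluate \(\hat p_{i,n-1},\hat p_{j,n-1}\)) does the quotient of the new numerator and denominator contributions reduce to the claimed single factor. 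Second, the ``base contribution'' is \emph{not} \(m_T\) in the original Mandelstam variables: eliminating the last row replaces \(s_{ij}\) by \(s_{ij}+s_{i,n-1}+s_{j,n-1}\) in the reduced Hessian, and correspondingly the restriction of \(\hat p\) to the old coordinates is the critical point of \(L_T\) at these shifted variables. You must apply the inductive hypothesis at the shifted variables, so that the brackets \([s_{lm}]\) it produces are those computed in \(T'\) rather than in \(T\); applied naively at the original variables, your induction yields the wrong linear forms. With these two corrections your argument closes and is a clean alternative to the paper's proof.
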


In order for this theorem to make sense, 
we  first need the definition of the amplitude $m_T$.
We shall work in the framework
of  beyond-planar MHV amplitudes developed by Arkani-Hamed et al.~in \cite{beyond}.
Our point of departure is the observation that every 2-tree on $[n-1]$ defines an on-shell diagram.
Here we view $T$ as a list of $n-2$ triples $ijk$, starting with $123$ and ending with $12n$.
The last triple $12n$ is special because it uses the vertex $n$.
Moreover, it does not appear in the formula (\ref{eq:huhformula}).
For a concrete example, 
we identify the 2-trees $T_1$ and $T_2$ in (\ref{eq:twotrees}) with the following two
on-shell diagrams, one planar and one non-planar:
\begin{equation}
\label{eq:twotrees2}
 T_1 :  \{123, 134, 145, 126 \} \qquad {\rm and} \qquad
T_2 : \{123,234,235, 126\}. 
\end{equation}

With each triple $ijk$ in $T$ we associate the row
vector $p_{jk} e_i - p_{ik} e_j + p_{ij} e_k$, and we define
$M_T$ to be the $(n-2) \times n$ matrix whose rows
are these vectors for all triples in $T$. For instance,
$$ M_{T_2} \,\,\, =\,\,\,
\begin{bNiceMatrix}[first-row,first-col]
& 1 & 2 & 3 & 4 & 5 & 6 \,\\
 & \,p_{23} & -p_{13} & \phantom{-} p_{12} & 0 & 0 & 0\, \\
 & \,0 & \phantom{-} p_{34} & - p_{24} & \,p_{23}\, & 0 & 0 \,\\
 &\, 0 & \phantom{-}p_{35} & -p_{25} & 0 & \,p_{23} \,& 0 \,\\
 & \,p_{26} & - p_{16} & 0  & 0 & 0 & \,p_{12}\, 
\end{bNiceMatrix}.
$$
Note that the kernel of $M_T$ coincides with the row span of the matrix $X$ in (\ref{eq:M0nmatrix}).
This implies that there exists a polynomial $\Delta(M_T)$ of
degree $n-3$ in the Pl\"ucker coordinates such that
the maximal minor of $M_T$ obtained by deleting columns $i$ and $j$
is equal to $\,\pm \,p_{ij} \cdot \Delta(M_T)$.
 
\begin{lemma} \label{lem:counttriangles}
For any 2-tree $T$, the gcd of the maximal minors of the matrix $M_T$ equals
\begin{equation}
\label{eq:counttriangles}
 \Delta(M_T) \,\,= \,\, \prod_{ij} p_{ij}^{v_T(ij)-1}, 
 \end{equation}
where the product is over all edges of $T$, and
 $v_T(ij)$ is the number of triangles  containing~$ij$.
\end{lemma}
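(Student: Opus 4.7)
The plan is to prove the lemma by induction on the number of vertices of $T$, using Laplace expansion along a leaf column for the inductive step. I write $\mu_{ab}(M)$ for the maximal minor of a matrix $M$ obtained by deleting columns $a,b$. The existence of $\Delta(M_T)$ and the relation $\mu_{ab}(M_T) = \pm p_{ab}\,\Delta(M_T)$ are granted by the paragraph preceding the lemma, resting on the Plücker identity $p_{jk}X_i - p_{ik}X_j + p_{ij}X_k = 0$ which places every row of $M_T$ inside $\ker X$. Once the explicit formula $\Delta(M_T)=\prod_{ij\in T} p_{ij}^{v_T(ij)-1}$ is established, the gcd claim will follow, since the $p_{ab}$ over distinct pairs $\{a,b\}\subset[n]$ are pairwise coprime in the Plücker coordinate ring, so the gcd of the maximal minors $\pm p_{ab}\,\Delta(M_T)$ reduces to the common factor $\Delta(M_T)$.

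For the base case $n=4$, the 2-tree $T=\{12,13,23\}$ yields the $2\times 4$ matrix $M_T$ with rows for $\{1,2,3\}$ and the appended $\{1,2,4\}$. Direct expansion of the six maximal minors (the one with $\{a,b\}=\{3,4\}$ requires the Plücker relation $p_{13}p_{24}-p_{14}p_{23}=p_{12}p_{34}$) gives $\Delta(M_T)=p_{12}$, in agreement with the formula since $v_T(12)=2$ and $v_T(13)=v_T(23)=1$. For the inductive step, I would pick a leaf vertex $\ell$ of the 2-tree $T$ with $\ell\notin\{1,2\}$: such an $\ell$ exists as soon as $T$ has at least two triangles, since if both $1$ and $2$ were leaves then $T$ would reduce to the single triangle $\{1,2,k\}$. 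Let $\{i,j,\ell\}$ be the unique triangle of $T$ containing $\ell$, and let $T'$ be the 2-tree obtained from $T$ by removing $\ell$ and this triangle. Because $\ell\notin\{1,2,n\}$, column $\ell$ of $M_T$ has exactly one nonzero entry, namely $\pm p_{ij}$ in the row indexed by $\{i,j,\ell\}$. For any $\{a,b\}\subset[n]\setminus\{\ell\}$, Laplace expansion along column $\ell$ then gives
$$ \mu_{ab}(M_T) \,\,=\,\, \pm\, p_{ij}\cdot \mu_{ab}(M_{T'}) \,\,=\,\, \pm\, p_{ij}\,p_{ab}\,\Delta(M_{T'}), $$
the second equality being the induction hypothesis. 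Combining this with $\mu_{ab}(M_T)=\pm p_{ab}\,\Delta(M_T)$ yields the recursion $\Delta(M_T)=\pm p_{ij}\cdot\Delta(M_{T'})$.

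To close the induction, I would verify that this recursion matches the product formula. Passing from $T'$ to $T$ adjoins the triangle $\{i,j,\ell\}$ and the two new edges $\{i,\ell\}, \{j,\ell\}$, so $v_T(i\ell)=v_T(j\ell)=1$, $v_T(ij)=v_{T'}(ij)+1$, and $v_T(cd)=v_{T'}(cd)$ on every other edge of $T'$. Substituting gives
$$ \prod_{cd\in T} p_{cd}^{v_T(cd)-1} \,\,=\,\, p_{i\ell}^{0}\,p_{j\ell}^{0}\cdot p_{ij}\cdot \prod_{cd\in T'} p_{cd}^{v_{T'}(cd)-1} \,\,=\,\, p_{ij}\cdot\Delta(M_{T'}), $$
matching the recursion up to sign. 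The main subtlety will be securing a leaf $\ell$ away from the three distinguished vertices $1,2,n$ so that column $\ell$ of $M_T$ is rank one; the combinatorial argument above always delivers such an $\ell$. The signs are irrelevant, since the lemma concerns the gcd.
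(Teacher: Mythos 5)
Your proof is correct and is essentially the paper's argument in inductive form: the paper simply observes that the rightmost maximal square submatrix of $M_T$ is lower triangular with diagonal entries $p_{ij}$, one per triangle, which is exactly what your iterated Laplace expansion along leaf columns produces, and both arguments rely on the granted identity $\mu_{ab}(M_T)=\pm p_{ab}\,\Delta(M_T)$ to pass from one minor to the gcd. The only cosmetic remark is that your leaf $\ell\notin\{1,2\}$ is most cleanly obtained as the last vertex $n-1$ added in the inductive construction of the 2-tree, rather than via the argument about $1$ and $2$ both being leaves.
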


\begin{proof}
The rightmost maximal square submatrix of $M_T$ is lower triangular.
Its determinant equals the product of the 
$p_{ij}$ where $\{i,j,k\}$ runs over all triangles in the 2-tree.
By construction of $T$, the number of occurrences of $p_{ij}$ is $v_T(ij)-1$.
We divide this product by $p_{12}$ to get $\Delta(M_T)$, since the triangle $\{1,2,n\}$ 
has to be disregarded for a 2-tree on $[n-1]$.
\end{proof}

Following \cite[equation (2.15)]{beyond}, we define the {\em integrand} associated to the 2-tree $T$ to be
\begin{equation}
\label{eq:treeintegrand}
\mathcal{I}_T \quad = \quad \frac{ \Delta(M_T)^2 } { \prod_{ijk \in T} p_{ij} p_{ik} p_{jk} }.
\end{equation}
This is a rational function of degree $-n$ in the Pl\"ucker coordinates.
Lemma \ref{lem:counttriangles} now implies:

\begin{corollary} \label{cor:assint}
Given any 2-tree $T$, the associated integrand equals
\begin{equation}
\label{eq:treeintegrand2}
 \mathcal{I}_T \,\,= \,\, \prod_{ij} p_{ij}^{v_T(ij)-2}. 
 \end{equation}
\end{corollary}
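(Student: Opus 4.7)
The proof of Corollary \ref{cor:assint} amounts to pure bookkeeping: substitute Lemma \ref{lem:counttriangles} into the definition (\ref{eq:treeintegrand}) of $\mathcal{I}_T$, and then rewrite the denominator by collecting factors of each edge. My plan is to carry this out in three short steps and finish with a consistency check on the total degree.

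First, squaring the formula of Lemma \ref{lem:counttriangles} immediately gives
$$ \Delta(M_T)^2 \,\,=\,\, \prod_{ij} p_{ij}^{\,2 v_T(ij)-2}, $$
the product ranging over the edges $ij$ of $T$. Second, I would swap the order of multiplication in the denominator of (\ref{eq:treeintegrand}): for each triple $ijk \in T$ the three factors $p_{ij}, p_{ik}, p_{jk}$ contribute one $p_{ij}$ to the edge $ij$, one $p_{ik}$ to the edge $ik$, and one $p_{jk}$ to the edge $jk$. Since the edge $ij$ lies in exactly $v_T(ij)$ of these triples by definition, regrouping yields
$$ \prod_{ijk \in T} p_{ij} p_{ik} p_{jk} \,\,=\,\, \prod_{ij} p_{ij}^{\,v_T(ij)}. $$

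Combining the two displayed identities, the exponent of each $p_{ij}$ in $\mathcal{I}_T$ is $(2v_T(ij)-2)-v_T(ij)=v_T(ij)-2$, which is exactly (\ref{eq:treeintegrand2}). As a sanity check, summing these exponents over all $2n-3$ edges of the on-shell diagram gives $3(n-2)-2(2n-3)=-n$, in agreement with the degree of $\mathcal{I}_T$ recorded after (\ref{eq:treeintegrand}).

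There is no real obstacle in the argument; the only subtle point is to fix conventions. Both products must run over the same set of edges — namely the $2n-5$ edges of the 2-tree on $[n-1]$ together with the two auxiliary edges $1n$ and $2n$ coming from the final triple $12n$ in the on-shell diagram interpretation — and $v_T(ij)$ must consistently count triangles in this enlarged list of $n-2$ triples. Once that convention is matched with the one used in Lemma \ref{lem:counttriangles}, the exponent arithmetic above closes immediately.
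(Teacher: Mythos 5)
Your proposal is correct and is exactly the computation the paper intends: the paper gives no explicit proof, stating only that Lemma \ref{lem:counttriangles} ``now implies'' the corollary, and your substitution of the lemma into (\ref{eq:treeintegrand}) followed by regrouping the denominator edge-by-edge is that implied argument. Your care with the convention (including the auxiliary edges $1n$, $2n$ and the triple $12n$ in both the edge product and the count $v_T$) and the degree check $3(n-2)-2(2n-3)=-n$ are both accurate and worth having spelled out.
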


The integrands for the 2-trees  $T_1$ and $T_2$ 
from our running example in 
(\ref{eq:twotrees}) and (\ref{eq:twotrees2}) are
$$ \mathcal{I}_{T_1} \,\,= \,\,
  \frac{1}{
 p_{16}
 p_{26}
 p_{23}
 p_{34}
 p_{45}
 p_{15}}
\qquad {\rm and} \qquad
\mathcal{I}_{T_2} \,\,= \,\,
\frac{p_{23}}{
 p_{16} p_{26}
p_{13} 
p_{24} p_{34}
 p_{25} p_{35}}
$$
Note that $\mathcal{I}_{T_1}$ is a Parke-Taylor factor
for $n=6$.
We are now
using the unconventional labeling $1,n,2,3,4,\ldots,n-1$ for the vertices
of the $n$-gon. Any triangulation of this $n$-gon is a 2-tree $T$.
Such 2-trees are called {\em planar}.
To be precise, the triangulation consists of the three edges of the triangle $\{1,n,2\}$
 together with the $2n-6$ edges given by the 2-tree $T$.
 
\begin{corollary}  \label{cor:planar}
For any planar 2-tree $T$,
our integrand equals the
Parke-Taylor factor
\begin{equation}
\label{eq:ParkeTaylor} \quad
\mathcal{I}_T \quad = \quad
 \frac{1}{p_{1n} \, p_{n2} \,p_{23} \,p_{34} \, \cdots\, p_{n-2,n-1} \,p_{n-1,1}}
 \,\, = \,\,  PT(1,n,2,3,\dots, n-1) .
  \end{equation}
\end{corollary}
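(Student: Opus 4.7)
The strategy is to apply Corollary \ref{cor:assint} directly and then carry out a short combinatorial count, using the fact that a planar 2-tree corresponds by definition to a triangulation of a convex $n$-gon.

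First I would invoke Corollary \ref{cor:assint}, which already gives
\[ \mathcal{I}_T \,=\, \prod_{ij} p_{ij}^{v_T(ij) - 2}, \]
where the product runs over all edges $ij$ appearing in the $n-2$ triples of the collection $T \cup \{\{1,2,n\}\}$. So the task reduces to identifying the value of $v_T(ij)$ on each such edge when $T$ is planar.

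The key geometric observation is that, by planarity, the $n-2$ triples in $T \cup \{\{1,2,n\}\}$ triangulate the convex $n$-gon whose vertices appear in the cyclic order $1, n, 2, 3, \ldots, n-1$. A standard fact about triangulations of a convex polygon then says that every boundary edge lies in exactly one triangle and every interior diagonal lies in exactly two. Applied here, this gives $v_T(ij) = 1$ for each of the $n$ boundary edges $1n$, $n2$, $23$, $34$, $\ldots$, $(n-1)1$, and $v_T(ij) = 2$ for each of the $n-3$ interior diagonals; in particular $12$ is a diagonal of this $n$-gon, since $n$ sits between $1$ and $2$ in the cyclic order.

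Plugging these values into the formula above immediately collapses the product: the interior diagonals contribute $p_{ij}^{0} = 1$ and drop out, while each boundary edge contributes $p_{ij}^{-1}$. What remains is exactly
\[ \mathcal{I}_T \,=\, \frac{1}{p_{1n}\, p_{n2}\, p_{23}\, p_{34} \cdots p_{n-2,n-1}\, p_{n-1,1}} \,=\, PT(1,n,2,3,\ldots,n-1), \]
as claimed. I do not foresee any serious obstacle: once Corollary \ref{cor:assint} is in hand, the argument is a one-line counting exercise. The only conceptual subtlety worth flagging is the unconventional cyclic order $1, n, 2, 3, \ldots, n-1$, which is natural because the distinguished triangle $\{1,2,n\}$ forces vertex $n$ to sit between $1$ and $2$.
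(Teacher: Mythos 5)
Your proof is correct and follows the same route as the paper, which simply states that the corollary ``follows directly from Corollary \ref{cor:assint}'' (citing the observation in \cite[Section~3]{beyond}); you have merely made explicit the counting that the paper leaves implicit, namely that in the triangulation of the $n$-gon with cyclic order $1,n,2,\dots,n-1$ each boundary edge has $v_T(ij)=1$ and each diagonal (including $12$) has $v_T(ij)=2$.
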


\begin{proof} This was observed in
\cite[Section~3]{beyond}. It follows directly from
Corollary \ref{cor:assint}.
\end{proof}

We now define the {\em amplitude} associated to a 2-tree $T$ to be the following expression:
\begin{equation}
\label{eq:mT}  m_T \,\, = \,\,- \frac{\mathcal{(I}_T)^2}{{\rm Hess}(L_T)}( \hat p ). 
\end{equation}
Here ${\rm Hess}(L_T)$ is the determinant of the Hessian
   of the log-likelihood function
in (\ref{eq:scattering2}). Numerator and denominator are evaluated at the
critical point $\hat p$, which is given by Theorem~\ref{thm:hornmain}.

\begin{example}[$n=6$] Let $T = T_2$ be the non-planar 2-tree in our running example. Then
$$  - {\rm Hess}(L_T)({\hat p}) 
\,=\,\frac{(s_{13} \!+\! s_{23} \!+\! s_{24} \!+\! s_{25} \!+\! s_{34} \!+\! s_{35})^7 
\,(s_{24} \!+\! s_{34})^3\,(s_{25} \!+\! s_{35})^3}
{s_{13}  \,(s_{23} + s_{24} + s_{25} + s_{34} + s_{35})^5 \, s_{24} \,s_{34} \, s_{25} \,s_{35}}
$$
$$ \qquad \qquad\quad = \quad
\frac{ ([s_{13}]+[s_{23}])^7}{[s_{13}]^1 \, [s_{23}]^5} \cdot
\frac{([s_{24}]+[s_{34}])^3}{[s_{24}]^1 \, [s_{34}]^1} \cdot
\frac{([s_{25}]+[s_{35}])^3}{[s_{25}]^1 \, [s_{35}]^1}.
$$
The numerator $\mathcal{(I}_T)^2(\hat p)$ is the same expression but
with each exponent increased by one. Therefore $m_T$ is
equal to the ratio given by the Horn matrix $H_T$ as promised in
Theorem~\ref{thm:treeamp}.
\end{example}

\begin{proof}[Proof of Theorem~\ref{thm:treeamp}]
Fix a 2-tree $T$ on $[n-1]$. For any edge $ij$ of $T$, we write
$a_{ij}$ for the number of descendants of that edge.
For any triangle $\{i , j < k\}$ of $T$, we set
$b_k = a_{ik} + a_{jk}+1$.

A key combinatorial lemma about 2-trees is that
$a_{ij}+1$ equals the sum of the integers
$2 \bigl(2-v_T(lm) \bigr) $ where $lm$
runs over the set ${\rm dec}_T(ij)$ of  descendants of
the edge $ij$. To be~precise,
\begin{equation}
\label{eq:keyobs} \qquad
a_{ij}  + 1\,\,\, =  \sum_{lm \in {\rm dec}_T(ij)} \!\!\!\!\! 2 (2- v_T(lm))
\,\,\quad \hbox{for all edges $ij \not= 12$.}
\end{equation}
The initial edge $12$ is excluded. 
Recall that $v_T(lm)$ is the number the triangles containing the edge $lm$.
We prove (\ref{eq:keyobs}) by induction on the construction of $T$,
after checking it for $n \leq 5$. Indeed, suppose
a new vertex $n$ enters the 2-tree, with edges $rn$ and $sn$.
Then $v_T(rs)$ increases by $1$ and $v_T(rn) = v_T(sn) = 1$.
Otherwise $v_T$ is unchanged. For all ancestors $ij$ of $rs$,
the right hand side and the left hand side of (\ref{eq:keyobs}) increase by $2$.
For $ij \in \{rn,sn\}$, both sides are $2$.
For all other edges $ij$, the two sides remain unchanged.
Hence (\ref{eq:keyobs}) is proved.

We now evaluate $(\mathcal{I}_T)^2$ at $\hat p$
by plugging (\ref{eq:hatp}) into the square of (\ref{eq:treeintegrand2}).
This gives
$$ (\mathcal{I}_T)^2(\hat p) \,\, = \,\,  \begin{small} \biggl(\, \prod_{lm}
(\hat p_{lm})^{v_T(lm) - 2}\, \biggr)^{\! 2} \,\, = \,\,
\prod_{lm} \prod_{ijk} \biggl(\frac{[s_{ik}] + [s_{jk}]}{[s_{ik}]} \biggr)^{\! 2(2 - v_T(lm))}, \end{small}
$$
where the inner product is over ancestral triangles $ijk$ of $lm$. Switching the products yields
$$ \begin{matrix} (\mathcal{I}_T)^2(\hat p) & = & \prod_{ijk}\left[ 
\prod_{lm \in {\rm dec}_T(ik)}   \bigl(\frac{[s_{ik}] + [s_{jk}]}{[s_{ik}]} \bigr)^{\! 2(2 - v_T(lm))} \cdot
\prod_{lm \in {\rm dec}_T(jk)}   \bigl(\frac{[s_{ik}] + [s_{jk}]}{[s_{jk}]} \bigr)^{\! 2(2 - v_T(lm))}
\right] \\ & = & 
  \prod_{ijk}\left[  \bigl(\frac{[s_{ik}] + [s_{jk}]}{[s_{ik}]}\bigr)^{a_{ik}+1} \cdot
                             \bigl(\frac{[s_{ik}] + [s_{jk}]}{[s_{jk}]}\bigr)^{a_{jk}+1} \right].
\end{matrix}
$$
where the product is over all triangles $ijk$ of $T$.
In conclusion, we have derived the  formula
\begin{equation} 
\label{eq:nicenumerator}
 (\mathcal{I}_T)^2(\hat p) \,\,\, = \,\,\,
\prod_{ijk}
\frac{ (\,[s_{ik}] + [s_{jk}]\,)^{b_k+1}}{ [s_{ik}]^{a_{ik}+1}\, [s_{jk}]^{a_{jk}+1}},
\end{equation}

\smallskip

In order to prove  Theorem~\ref{thm:treeamp}, we must
show that the Hessian at the critical point equals
\begin{equation} 
\label{eq:nicedenominator}
 {\rm Hess}(L_T)(\hat p) \,\, = \,\, -
\prod_{ijk } 
\frac{ (\,[s_{ik}] + [s_{jk}]\,)^{b_k}}{ [s_{ik}]^{a_{ik}}\, [s_{jk}]^{a_{jk}}}.
\end{equation}
The proof is organized by an induction on $k$,
where the Mandelstam invariants $s_{ij}$
are transformed as we deduce the desired formula 
for $k$ from corresponding formula for $k-1$.

As a warm-up, it is instructive to examine the case $k=4$, where the Hessian is 
a $1 \times 1$ matrix. The entry of that matrix
is the second derivative of  (\ref{eq:tiny})
evaluated at (\ref{eq:simpleformula}).  We find
$$ {\rm Hess}(L_T)(\hat p) \,\, = \,\,
\frac{\partial^2 L}{\partial x_1^{\,2}} (\hat p) 
\,\, = \,\, -\frac{s_{13}}{\hat x_1^2} - \frac{s_{23}}{(\hat x_1-1)^2} \,\, = \,\,
- \frac{(s_{13} + s_{23})^3}{s_{13}  s_{23}}. $$
This equation matches (\ref{eq:nicedenominator}), and it
serves as the blueprint for
 the identity in (\ref{eq:Hkk}) below.

Our first step towards (\ref{eq:nicedenominator}) is to get rid of the minus sign.
To this end, we write  $\mathcal{H}$ for the Hessian matrix of
the negated scattering potential $L_T$, evaluated at $\hat p$.
Its entries are
$$
\mathcal{H}_{ii}\,=\,\underset{\ell=1}{\overset{n}{\sum}} \frac{s_{i \ell}}{\hat{p}_{i \ell}^{\,2}}
\qquad {\rm and}  \qquad \mathcal{H}_{ij}\,=\,-\frac{s_{ij}}{\hat{p}_{ij}^{\,2}}
\quad {\rm for} \,\,\,i \not= j.
$$
We shall prove that ${\rm det}(\mathcal{H})$ equals the product
on the right hand side
(\ref{eq:nicedenominator}). This will be done by downward induction.
   Let $k$ be the last vertex, connected to earlier vertices $i,j$.
We display the rows and columns of the Hessian that are indexed by the triangle
$\{i,j,k\}$:
  $$ \mathcal{H} \,\, = \,\,\begin{bmatrix}
    ... & ... & ... & ... & ... & 0 \\
    ... & ...+\frac{s_{ij}}{\hat{p}_{ij}^2}+\frac{s_{ik}}{\hat{p}_{ik}^2} & ... & -\frac{s_{ij}}{\hat{p}_{ij}^2} & ... & -\frac{s_{ik}}{\hat{p}_{ik}^2} \\
    ... & ... & ... & ... & ... & 0 \\
    ... &-\frac{s_{ij}}{\hat{p}_{ij}^2} & ... & ...+\frac{s_{ij}}{\hat{p}_{ij}^2}+\frac{s_{jk}}{\hat{p}_{jk}^2} & ... & -\frac{s_{jk}}{\hat{p}_{jk}^2} \\
    ... & ... & ... & ... & ... & 0 \\
     0 & -\frac{s_{ik}}{\hat{p}_{ik}^2} & 0 & -\frac{s_{jk}}{\hat{p}_{jk}^2} & 0 & \frac{s_{ik}}{\hat{p}_{ik}^2}+\frac{s_{jk}}{\hat{p}_{jk}^2}
\end{bmatrix}.$$ 

Since $k$ is a terminal node in the 2-tree $T$, its two edges satisfy 
a variant of~(\ref{eq:simpleformula}), namely
\begin{equation}
\label{eq:terminal} \hat p_{ik}  \,\,= \,\, \frac{s_{ik}}{s_{ik} \!+\! s_{jk}} \cdot \hat p_{ij} \quad {\rm and} \quad
      \hat p_{jk}  \,\,= \,\, \frac{s_{jk}}{s_{ik} \!+\! s_{jk}} \cdot \hat p_{ij} .
\end{equation}
Using these identities, the lower right entry of the matrix $\mathcal{H}$ can be written as follows:
\begin{equation}
\label{eq:Hkk}
\mathcal{H}_{kk} \,\,\,= \,\,\,\frac{s_{ik}}{\hat{p}_{ik}^2}+\frac{s_{jk}}{\hat{p}_{jk}^2}
\,\,\,=\,\,\,\frac{s_{ik}}{\frac{s_{ik}^2 \hat{p}_{ij}^2}{(s_{ik}+s_{jk})^2}}+\frac{s_{jk}}{\frac{s_{jk}^2 \hat{p}_{ij}^2}{(s_{ik}+s_{jk})^2}}\,\,\,=\,\,\,\frac{(s_{ik}+s_{jk})^3}{s_{ik} \,s_{jk}} \cdot \frac{1}{ \hat{p}_{ij}^2} .
\end{equation}
By factoring out $\mathcal{H}_{kk}$ from the last row, 
the lower right entry becomes $1$.
We add~multiples of the last row to rows $i$ and $j$, so as
to cancel their last entries. The resulting upper left block
with one fewer row and  one fewer column is 
the Hessian matrix of the
 scattering potential  for the  2-tree which is
 obtained from $T$ by removing vertex $k$
and its two incident edges $ik$ and $jk$.
However, in the new matrix, $s_{ij}$ is now replaced
by $s_{ij} + s_{ik} + s_{jk}$. Note that this sum equals $[s_{ij}]$
if $v_T(ij) = 2$.
Proceeding inductively, more and more terms get added, and 
eventually each Mandelstam invariant $s_{lm}$ is replaced by 
the corresponding sum $[s_{lm}]$.

In the end, we find that
the determinant of $\mathcal{H}$ is equal to the product of the quantities
\begin{equation}
\label{eq:Hkk2}
\frac{[s_{ik}]}{\hat{p}_{ik}^2}+\frac{[s_{jk}]}{\hat{p}_{jk}^2}
\,\,\,=\,\,\,\frac{([s_{ik}]+[s_{jk}])^3}{[s_{ik}]\, [s_{jk}]} \cdot \frac{1}{ \hat{p}_{ij}^2} ,
\end{equation}
where $ijk$ ranges over all triangles of the 2-tree $T$.
A combinatorial argument like that presented above shows that
this product is equal to (\ref{eq:nicedenominator}). This completes the proof.
\end{proof}

\begin{remark}
Our proof rests on the fact
 that the Hessian is the product of
the expressions (\ref{eq:Hkk2}). Another way to get this is to directly
triangularize the scattering equations $\nabla L_T = 0$.
Using Corollary \ref{cor:hatp}
and the trinomials defining $X_T$, we see that
$\nabla L_T = 0$ is equivalent to
$$ 
\frac{[s_{ik}]}{p_{ik}} \, + \, \frac{[s_{jk}]}{p_{jk}} \,\, = \,\, 0 
\qquad \hbox{for all triangles $ \,\, ijk \,\, $ of $\,T$.}
$$
This is a triangular system of $n-3$ equations in the
unknowns $x_1,x_2,\ldots,x_{n-3}$. The Jacobian 
of this system is upper triangular, and its determinant
is the product of the expressions (\ref{eq:Hkk2}).
\end{remark}

\section{On-Shell Diagrams and Hypertrees}\label{sec5}

On-shell diagrams were developed by Arkani-Hamed, Bourjaily, Cachazo, Postnikov and Trnka \cite{PositiveGrassAmp}.
They encode rational parts, or leading singularities \cite{LS2008}, occurring in scattering amplitudes for $\mathcal{N}=4$ Super Yang-Mills theory \cite[Section 4.7]{PositiveGrassAmp}.  These are defined using the spinor-helicity formalism on a product of Grassmannians, ${\rm Gr}(2,n) \times {\rm Gr}(2,n)$.  We focus on the special case of MHV on-shell diagrams, where leading singularities are rational functions in the coordinates
$p_{ij}$ ($=\langle ij\rangle$, in physics notation \cite[Section 2.3]{beyond}) of a single 
Grassmannian ${\rm Gr}(2,n)$.
The resulting discontinuities of MHV amplitudes are, for us, CHY integrands.

In \cite[Section 2.2]{beyond}, an identification was proposed between on-shell diagrams and certain
collections of $n-2$ triples in $[n]=\{1,\ldots, n\}$.  Among these on-shell diagrams are the hypertrees of 
Castravet-Tevelev \cite{CT}.  This identification is again noted in \cite[Lemma 9.5]{Tev}. 

In algebraic geometry, hypertrees represent effective divisors on
the moduli space $\mathcal{M}_{0,n}$. 
In physics,  one considers also on-shell diagrams for
higher Grassmannians ${\rm Gr}(k,n)$; cf.~\cite{FGPW2015}.
The analogs to hypertrees are now
 $(n-k)$-element collections of $(k+1)$-sets in $[n]$.
These define effective divisors on the configuration spaces
$X(k,n) = {\rm Gr}(k,n)^o/(\CC^*)^n$.
It would be interesting to examine these through the lens of \cite{CT, Tev}.
In this paper, we stay with $k=2$.

\smallskip

We define a {\em hypertree} to be a collection $T$ of
	$n-2$ triples $\Gamma_1,\ldots, \Gamma_{n-2}$ in $[n]$
	such that 
	\begin{enumerate}
		\item[(a)] each $i\in [n]$ appears in at least two triples, and
		\item[(b)] $\,\big\vert \bigcup_{i\in S}\Gamma_i\big\vert \,\ge\, \vert S\vert +2\,$
		for all non-empty subsets $S \subseteq [n-2]$.
	\end{enumerate}
Hypertrees have the same number of triples
as 2-trees. But 2-trees are not hypertrees because
some $i$ appears in only one triple.  However, if the axiom (a) for hypertrees is dropped, but 
(b) is kept, then one obtains all nonzero leading singularities of MHV amplitudes.
 In particular, 2-trees satisfy (b), and one might view them as
 hypertrees in a weak sense. A hypertree is called
 {\em irreducible} if the inequality in (b) is strict
 for $2 \leq |S| \leq n-3$; see
      \cite[Definition 1.2]{CT}.

\begin{example}[$n=6$] \label{ex:running5}
The 2-trees in (\ref{eq:twotrees2}) are not hypertrees. 
The following is a hypertree:
\begin{equation}
\label{eq:running5}
T \,\,=\,\, \{\,123,345,156 ,246\,\}.
\end{equation}
But it is not a 2-tree. 
The hypertree $T$ 
corresponds  to the octahedral on-shell diagram
in \cite[Figure 1.1]{CEGM2019A}. See also
\cite[Figure 2]{CT} and \cite[Section 5.2]{subspaces}.
This hypertree is irreducible, in the sense defined above, and it
will serve as our running example throughout this section.
\end{example}

Many of the concepts for 2-trees from
previous sections make sense for hypertrees $T$.
We define the $(n-2) \times n$ matrix $M_T$ as in Section \ref{sec4}.
The rows of $M_T$ are the vectors $p_{jk} e_i - p_{ik} e_j + p_{ij} e_k$
for $\{i,j,k\} \in T$. These span the kernel of
the $2 \times n$ matrix $X$ in  (\ref{eq:M0nmatrix}).
The gcd of the maximal minors of $M_T$ is a polynomial
$\Delta(M_T)$ of degree $n-3$ in the Pl\"ucker coordinates $p_{ij}$.
The equation $\Delta(M_T) = 0$ defines a divisor in $\mathcal{M}_{0,n}$,
namely the {\em hypertree divisor}.
If $T$ is a 2-tree  then the hypertree divisor is a
union of Schubert divisors $\{p_{ij} = 0\}$, as seen in Lemma \ref{lem:counttriangles}.
For the geometric application in \cite{CT}, this case is uninteresting.
Instead, Castravet and Tevelev focus on hypertree
divisors that are irreducible; see
 \cite[Theorem~1.5]{CT}.

\begin{example}[Irreducible hypertree]
The hypertree $T$ in (\ref{eq:running5}) is irreducible. Its matrix is
$$ M_T \,\,\, =\,\,\,
\begin{bNiceMatrix}[first-row,first-col]
& 1 & 2 & 3 & 4 & 5 & 6 \,\\
 & \,p_{23} & -p_{13} & \, p_{12} \,& 0 & 0 & 0\, \\
 & 0 & 0 & \,p_{45} \,& -p_{35} & \phantom{-} p_{34} & 0 \\
 &\, p_{56} & 0 & 0 & 0 & -p_{16} & \,p_{15} \,\\
 & 0 & \phantom{-} p_{46} & 0 & -p_{26} & 0 &\, p_{24} \,
\end{bNiceMatrix}.
$$
The hypertree divisor for $T$ is an irreducible surface in the threefold $\mathcal{M}_{0,6}$.
It is defined by
$$ \Delta(M_T) \,\, = \,\, p_{12} p_{35} p_{46} \,-\, p_{13} p_{26} p_{45}. $$
This polynomial is irreducible in 
the coordinate ring of ${\rm Gr}(2,6)$.
See also \cite[Figure 2]{CT}.
The corresponding on-shell diagram
appears in \cite[eqn (2.18)]{beyond}.
Different labelings are used.
\end{example}

For every hypertree $T$, we define the 
{\em CHY integrand} $\,\mathcal{I}_T\,$
by the formula in  (\ref{eq:treeintegrand}). This is a rational function of degree $-n$
in the Pl\"ucker coordinates $p_{ij}$.
The {\em scattering potential} for $T$ is the log-likelihood
function $L_T$ in (\ref{eq:scattering2}) where
the sum is over all pairs $(i,j)$ that
are contained in some triple of $T$.
Thus we set $s_{ij} = 0$ in (\ref{eq:scattering})
for all non-edges $ij$ of $T$.
The hypertree $T$ in (\ref{eq:running5}) has
three non-edges, namely $14$, $25$ and $36$.
Furthermore, we use the same formulas as in (\ref{eq:mT}) to define the
 {\em hypertree amplitude} for $T$. To be precise, we set
\begin{equation}
\label{eq:mT2}  m_T \,\, = \,\, \sum_{\hat p} \frac{\mathcal{(I}_T)^2}{{\rm Hess}(L_T)}( \hat p ),
\end{equation}
where the sum ranges over all critical points $\hat p$ of the scattering potential $L_T$.
This is a rational function of degree $3-n$ in the Mandelstam invariants $s_{ij}$
where $(i,j)$ appears in $T$.

\begin{remark} \label{rmk:MLD0}
Our definition of $L_T$ for hypertrees $T$ differs from that 
for 2-trees in (\ref{eq:scattering2}).
 The difference arises from the restriction to the
basis $S$ in (\ref{eq:setS}) which reflects the gauge fixing in 
(\ref{eq:M0nmatrix}).
In the new definition,  $L_T$ has
no critical point when $T$ is a 2-tree.
For instance, let $n = 4$ and $T = \{123, 124\}$.
This $T$ has one non-edge, namely $34$,
which is not in $S =  \{13,23\}$. The new definition
requires us to set $s_{34} = -s_{13} - s_{23}$ to zero,
so that $s_{23} = - s_{13}$. In this case, the function in
  (\ref{eq:tiny}) becomes $L_T = s_{13}\cdot {\rm log}(x_1/(1-x_1))$,
  which has no critical point.
  Minimal kinematics only arises when $s_{34}$ remains
  an unknown, resulting in the critical point (\ref{eq:simpleformula}).
\end{remark}

\begin{example}[Hypertree amplitude] \label{ex:hyperamp}
We compute the amplitude $m_T$ for the hypertree $T$ in (\ref{eq:running5}).  The generic $n=6$ scattering potential $L$ has six critical points $\hat p$, but the restricted scattering potential $L_T$ has only two. 
The expression (\ref{eq:mT2}) makes sense also for $L$. We have
\begin{eqnarray} \label{eq:sumovertwo}
	m_T & = & \sum_{\hat{p}} \frac{1}{\text{Hess}(L)}\left(\frac{
	(p_{12} p_{35} p_{46} \,-\, p_{13} p_{26} p_{45})^2}{(
	p_{12}p_{13}p_{23})(p_{34}p_{35}p_{45})(p_{15}p_{16}p_{56})(p_{24}p_{26}p_{46})}\right)^{\! 2}(\hat{p})
\end{eqnarray}
This evaluates to a rational function in the Mandelstam invariants.
For generic $s_{ij}$, we find
$$ \begin{matrix}
	m_T  \,\,=\,\,  \frac{1}{s_{16} s_{24} s_{35}}+\frac{1}{s_{16} s_{23} s_{45}}+\frac{1}{s_{13} s_{26} s_{45}}+\frac{1}{s_{15} s_{23} s_{46}}+\frac{1}{s_{12} s_{35} s_{46}}+\frac{1}{s_{15} s_{26} s_{34}}
	 +  \frac{1}{s_{12} s_{34} s_{56}}+\frac{1}{s_{12} s_{35} s_{124}}
	\smallskip \\  \quad
	 +\, \frac{1}{s_{24} s_{35} s_{124}}
	 +\frac{1}{s_{12} s_{56} s_{124}}
	 +\frac{1}{s_{24} s_{56} s_{124}}
	 +\frac{1}{s_{13} s_{24} s_{56}} 
	 +  \frac{1}{s_{15} s_{34} s_{125}}
	 +\frac{1}{s_{12} s_{46} s_{125}}
	 +\frac{1}{s_{15} s_{46} s_{125}}
	 +\frac{1}{s_{13} s_{26} s_{134}}\smallskip \\  \quad \,
	 +\,\frac{1}{s_{26} s_{34} s_{134}}+\frac{1}{s_{12} s_{34} s_{125}}
	+  \frac{1}{s_{34} s_{56} s_{134}}+\frac{1}{s_{15} s_{23} s_{145}}+\frac{1}{s_{15} s_{26} s_{145}}+\frac{1}{s_{23} s_{45} s_{145}}+\frac{1}{s_{26} s_{45} s_{145}}+\frac{1}{s_{13} s_{56} s_{134}}
	\smallskip \\ \quad \,\,
	 + \, \frac{1}{s_{16} s_{35} s_{235}}+\frac{1}{s_{23} s_{46} s_{235}}+\frac{1}{s_{35} s_{46} s_{235}}+\frac{1}{s_{13} s_{24} s_{245}}+\frac{1}{s_{16} s_{24} s_{245}}+\frac{1}{s_{16} s_{23} s_{235}} 
 +  \frac{1}{s_{16} s_{45} s_{245}}+\frac{1}{s_{13} s_{45} s_{245}}.
\end{matrix}
$$
We next impose the constraints $s_{14} = s_{25} = s_{36}=0$
coming from the hypertree $T$. Some poles now become spurious.
  By collecting distinct maximal nonzero residues and then canceling spurious poles, we obtain the Feynman diagram expansion for the hypertree amplitude:
$$
\begin{matrix}
		m_T & = &  \frac{1}{s_{16} s_{24} s_{35}}+\frac{1}{s_{16} s_{23} s_{45}}+\frac{1}{s_{13} s_{26} s_{45}}+\frac{1}{s_{15} s_{23} s_{46}}+\frac{1}{s_{12} s_{35} s_{46}}+\frac{1}{s_{13} s_{24} s_{56}}+\frac{1}{s_{12} s_{34} s_{56}}+\frac{1}{s_{15} s_{26} s_{34}} \medskip \\
		& + & \frac{ \phantom{/} s_{15}\,+\,s_{45 }\phantom{/} }{s_{15} s_{23} s_{26} s_{45}}+\frac{\phantom{/}s_{12}\,+\,s_{24}\phantom{/} }{s_{12} s_{24} s_{35} s_{56}}
		+\frac{\phantom{/}s_{12}\,+\,s_{15}\phantom{/} }{s_{12} s_{15} s_{34} s_{46}} + \frac{\phantom{/}s_{13}\,\,+s_{34}\phantom{/} }{s_{13} s_{26} s_{34} s_{56}}+\frac{\phantom{/}s_{13}\,+\,s_{16}\phantom{/} }{s_{13} s_{16} s_{24} s_{45}}+\frac{ \phantom{/}s_{16}\,+\,s_{46}\phantom{/}}{s_{16} s_{23} s_{35} s_{46}}.
\end{matrix}
$$
This is the rational function (\ref{eq:mT2}), where we sum over the two critical points of $L_T$.
This amplitude has $12$ poles,  $24$ compatible pairs, and $14$ Feynman diagrams, in bijection with the faces of a {\em rhombic dodecahedron}, shown in Figure \ref{fig:rhombdod}.
By comparison, the  amplitude in (\ref{eq:m6}) has $ 9$ poles and 
is a sum over $14$ Feynman diagrams, given combinatorially by the
{\em associahedron}.
 \begin{figure}[h]
	\centering
	\vspace{-0.2in}
	\includegraphics[width=0.456\linewidth]{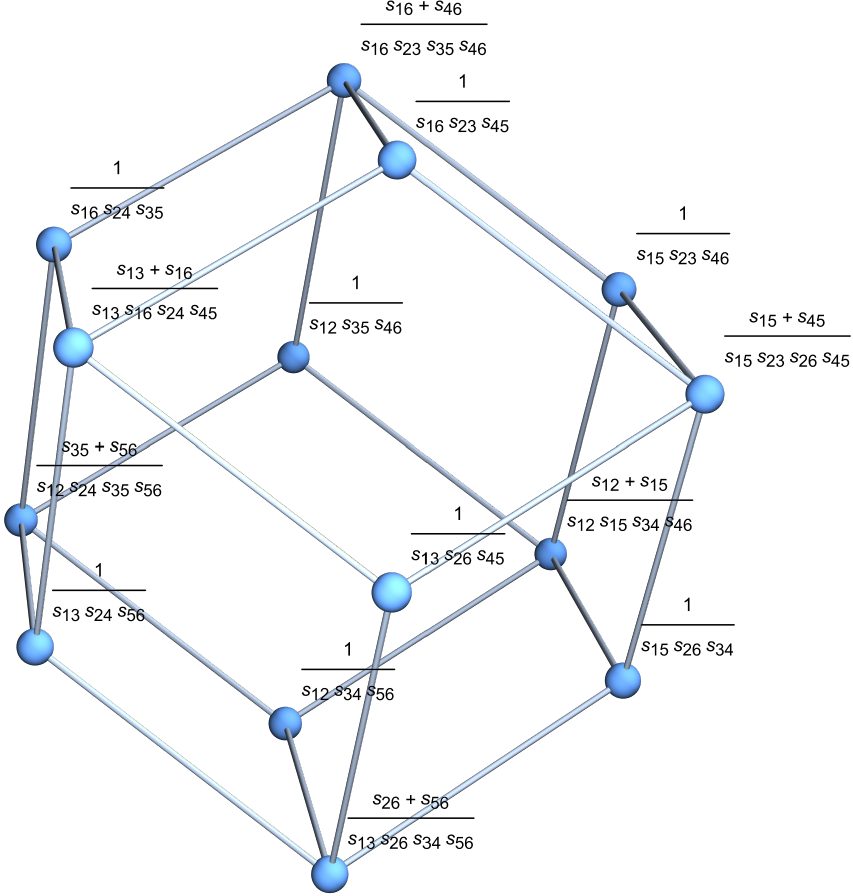}
	\caption{Combinatorics of the octahedral hypertree amplitude $m_T$.}
	\label{fig:rhombdod}
\end{figure}
\end{example}

It was shown in \cite[Section 3.2]{beyond} that, for
 any on-shell diagram $T$, the  CHY integrand $\mathcal{I}_T$
decomposes as a sum of Parke-Taylor factors $PT(\alpha_1,\ldots, \alpha_n)$.
Therefore, $m_T$ is a linear combination of biadjoint amplitudes
$m(\alpha,\beta)$'s as $\alpha,\beta$ range over pairs of cyclic orders on $\lbrack n\rbrack$ 
that are both cyclic shuffles of the triples in $T$.  See \cite{CHYB} for details on this construction.
Such a decomposition with seven terms  is shown in \cite[eqn (3.12)]{beyond}
 for the integrand  $\mathcal{I}_T$  in~(\ref{eq:sumovertwo}).

This example raises several questions for future research.
The first concerns the {\em maximum likelihood degree}
(ML degree) of an arbitrary hypertree $T$. By this we mean the
number of complex critical points of the function $L_T$.
The hypertree $T$ in (\ref{eq:running5}) has ${\rm MLdegree}(T) = 2$.

\begin{question}
Can we find a formula for the ML degree of an arbitrary on-shell diagram, and in particular for an arbitrary hypertree $T$? How is that ML degree related to the geometry of the hypertree divisor
$\{\Delta(M_T) = 0\}$ on the moduli space $\mathcal{M}_{0,n}$?
\end{question}

\begin{remark}
We computed the ML degree for the hypertrees in the Opie-Tevelev database
\url{https://people.math.umass.edu/~tevelev/HT_database/database.html}.
For instance, for $n=9$, the ML degrees range from $8$ to $16$.
For a concrete example consider the hypertree $T = \{123,129,456,789,147,258,367\}$.
Here the ML degree equals $10$, i.e.
the function $L_T$ has $10$ critical points.
By contrast, the general scattering potential $L$ in
(\ref{eq:scattering}) has $720$ critical points,
and its $m(\alpha,\beta)$ expansion 
involves $3185$ Feynman diagrams.
After imposing $s_{ij}=0$ for every non-edge $ij$, we find that
$297$ distinct maximal residues remain in the amplitude $m_T$.  This is still a considerable 
amount of structure to be found from only $10$ critical points.
\end{remark}

Returning to the title of this paper,
we ought to be looking for minimal kinematics.

\begin{question}
For any hypertree $T$, how to best reach ML degree one by
restricting the scattering potential $L_T$ to a subspace of kinematic space?
In particular, can we always reach a Horn uniformization formula  (\ref{eq:hornuni})
for the critical points $\hat p$
  by setting some multiple-particle poles $s_{ij \cdots k}$ to zero?
This would lead to a formula like  (\ref{eq:huhformula})
  for the specialized amplitude $m_T$.
\end{question}

The following computation shows that the answer is ``yes'' for our running example.

\begin{example} \label{ex:yes}
In Example \ref{ex:hyperamp} we set the three-particle pole $s_{234}= s_{23} + s_{34} + s_{24}$ to zero,
 in addition~to $s_{14} = s_{25} = s_{36}=0$.
This results in a dramatic simplification of the amplitude:
$$ m_T\,\, =\,\, \frac{  \left(s_{12}+s_{45}\right) \left(s_{13} + s_{46} \right) \left( s_{26} + s_{35} \right) }{
s_{12} s_{13} s_{26} s_{35} s_{45} s_{46} }. $$
The ML degree is now one. By Huh's Theorem \cite{Huh},
the critical point is given by a Horn pair $(H,\lambda)$.
In short, the subspace  $\{s_{14} = s_{25} = s_{36} = s_{234} = 0\}$ 
exhibits minimal kinematics.
\end{example}

Finally, all of our questions extend naturally from ${\rm Gr}(2,n)$ to ${\rm Gr}(k,n)$.
Using physics acronyms, we seek to extend our amplitudes $m_T$ from
CHY theory \cite{CHY} to CEGM theory~\cite{CEGM2019A}. 
For example, the CEGM potential on the
$4$-dimensional space $X(3,6) = {\rm Gr}(3,6)^o/(\CC^*)^6$~is
$$ L \,\,\, = \sum_{1\le i<j<k\le 6} \!\! \log(p_{ijk}) \cdot \mathfrak{s}_{ijk}. $$
This log-likelihood function is known to have $26$ critical points; see e.g.~\cite[Proposition 5]{ST}.
We now restrict to the kinematic subspace
  $\{\mathfrak{s}_{135}=\mathfrak{s}_{235}=\mathfrak{s}_{246}=\mathfrak{s}_{256}=\mathfrak{s}_{356}=\mathfrak{s}_{245}=0\}$. 
  Then the ML degree drops from $26$ to $1$.
  In short, this subspace exhibits minimal kinematics.

\begin{question}
Can we characterize minimal kinematics for the
configuration space $X(k,n) = {\rm Gr}(k,n)^o/(\CC^*)^n$?
What plays the role that 2-trees have in Theorem \ref{thm:vier}?
Can we determine the
ML degree of on-shell diagrams  for $k \geq 3$?
\ \ Ambitiously, we seek 
{\em an all k and n peek}~\cite{CE}.
\end{question}

                \bigskip
		\bigskip

		\footnotesize
                \noindent {\bf Authors' addresses:}

                \smallskip

                \noindent Nick Early,
                MPI-MiS Leipzig
                \hfill \url{Nick.Early@mis.mpg.de}

                \noindent Ana\"elle Pfister,
                MPI-MiS Leipzig
                \hfill \url{anaelle.pfister@gmail.com}

                \noindent  Bernd Sturmfels, MPI-MiS Leipzig
                \hfill \url{bernd@mis.mpg.de}


\begin{thebibliography}{10}
\begin{small}
  \setlength{\itemsep}{-0.6mm}

\bibitem{ABFKST}
 D.~Agostini, T.~Brysiewicz, C.~Fevola, L.~K\"uhne, B.~Sturmfels and S.~Telen:
{\em Likelihood degenerations}, Advances in Mathematics {\bf 414} (2023) 108863. 

\bibitem{PositiveGrassAmp} N.~Arkani-Hamed, J.~Bourjaily, F.~Cachazo, A.~Postnikov and J.~Trnka: {\em Grassmannian Geometry of Scattering Amplitudes},
Cambridge University Press,
2016.

\bibitem{beyond} N.~Arkani-Hamed, J.~Bourjaily, F.~Cachazo, A.~Postnikov and J.~Trnka:
  {\em On-shell structures of MHV amplitudes beyond the planar limit},
  Journal of High Energy Physics {\bf 6} (2015) 179.

  \bibitem{BGKN} M.~Bodirsky, O.~Gim\'enez, M.~Kang and M.~Noy:
  {\em Enumeration and limit laws for series-parallel graphs},
  European Journal of Combinatorics {\bf 28} (2007) 2091--2105.
  
  \bibitem{Bry}
  T.H.~Brylawski: {\em A combinatorial model for series-parallel networks},
Transactions of the American Mathematical Society {\bf 154} (1971) 1--22.

 \bibitem{LS2008} F.~Cachazo: {\em Sharpening the leading singularity}, {\tt arXiv:0803.1988}.

\bibitem{CHY} F.~Cachazo, S.~He and E.~Yuan: {\em Scattering equations and 
Kawai-Lewellen-Tye orthogonality}, Physical Review D {\bf 90} (2014) 065001.

\bibitem{CHYB} F.~Cachazo, S. He and E.~Yuan: {\em Scattering of massless particles: scalars, gluons and gravitons},
  Journal of High Energy Physics {\bf 7} (2014) 33.

 \bibitem{CEGM2019A} F.~Cachazo, N.~Early, A.~Guevara and S.~Mizera:
  {\em $\Delta$-algebra and scattering amplitudes},
 Journal of High Energy Physics  {\bf 2} (2019) 5.

\bibitem{CE}  F.~Cachazo and N.~Early:
  {\em Minimal kinematics: an all k and n peek into ${\rm Trop}_+ G(k,n)$},
SIGMA Symmetry Integrability Geom.~Methods Appl.~{\bf 17} (2021) 078.

\bibitem{CT} A-M.~Castravet and J.~Tevelev: {\em Hypertrees, projections, and moduli of stable rational curves}, Journal f\"{u}r die reine und angewandte Mathematik {\bf 675} (2013) 121--180.


 \bibitem{Cruz} L.~de la Cruz: {\em Holonomic representation of biadjoint scalar amplitudes},
 Journal of High Energy Physics {\bf 10} (2023) 98.

\bibitem{DMS} E.~Duarte, O.~Marigliano and B.~Sturmfels:
{\em Discrete statistical models with rational maximum likelihood estimator},
 Bernoulli {\bf 27} (2021) 135--154.

 \bibitem{Early} N.~Early: {\em  Generalized permutohedra in the kinematic space},  {\tt arXiv:1804.054603}.
  
\bibitem{FGPW2015} S.~Franco, D.~Galloni, B.~Penante and C.~Wen:
  {\em Non-planar on-shell diagrams}, Journal of High Energy Physics {\bf 6} (2015) 199.

\bibitem{subspaces} S.~He, G.~Yan, C.~Zhang and Y.~Zhang: {\em
  Scattering forms, worldsheet forms and amplitudes from subspaces},
  Journal of High Energy Physics {\bf 8} (2018) 40.

\bibitem{Huh}  J.~Huh: {\em Varieties with maximum likelihood degree one}, 
 J.~Algebraic Statistics {\bf 5} (2014) 1--17.
  
\bibitem{LG} J.~Huh and B.~Sturmfels:  {\em Likelihood geometry},
  in Combinatorial Algebraic Geometry (eds. Aldo Conca et al.), Lecture Notes in Mathematics {\bf 2108},
  Springer Verlag, (2014) 63--117. 

\bibitem{Kap}
M.~Kapranov: {\em   A characterization of $A$-discriminantal hypersurfaces
in terms of the logarithmic Gauss map}, Math.~Annalen  {\bf 290}  (1991) 277--285.

\bibitem{Oxl}
J.~Oxley: {\em On Crapo's beta invariant for matroids},
Studies in Applied Math.~{\bf 66} (1982) 267--277.

\bibitem{ST}  B.~Sturmfels and S.~Telen:
  {\em Likelihood equations and scattering amplitudes},
  Algebraic Statistics {\bf 12} (2021) 167--186.

\bibitem{Tev} J.~Tevelev: {\em Scattering amplitudes of stable curves},
Geometry and Topology, to appear, {\tt arXiv:2007.03831}.

\bibitem{Zas} T.~Zaslavsky: {\em Facing up to arrangements: face-count formulas for
partitions of space by hyperplanes}, Memoirs Amer.~Math.~Soc.~{\bf 1} (1975), no.~154.

\end{small}
\end{thebibliography}
\end{document}